\documentclass[11pt]{amsart}
\usepackage[dvips]{color}
\usepackage{amsmath}
\usepackage{amsxtra}
\usepackage{amscd}
\usepackage{amsthm}
\usepackage{amsfonts}
\usepackage{amssymb}
\usepackage{eucal}
\usepackage{epsfig}
\setlength{\hoffset}{-1in}
\setlength{\voffset}{-1in}
\setlength{\oddsidemargin}{1in}
\setlength{\evensidemargin}{1in}
\setlength{\textwidth}{6.in}
\setlength{\textheight}{8in}
\setlength{\topmargin}{1in}
\setlength{\baselineskip}{14pt}
\theoremstyle{plain}
\newtheorem{theorem}{Theorem}[section]

\newtheorem{cor}[theorem]{Corollary}
\newtheorem{prop}[theorem]{Proposition}
\newtheorem{lem}[theorem]{Lemma}

\theoremstyle{definition}


\newcommand{\Glie}{\mathfrak{g}}             
\newcommand{\Gaff}{\widehat{\mathfrak{g}}}   

\newcommand{\BC}{\mathbb{C}}            
\newcommand{\BZ}{\mathbb{Z}}            

\newcommand{\BV}{\textbf{V}}             
\newcommand{\BQ}{\textbf{Q}}             
\newcommand{\BP}{\textbf{P}}             
\newcommand{\End}{\textrm{End}}          
\newcommand{\CB}{\mathcal{B}}            

\newcommand{\super}{\mathbb{Z}_2}        
\newcommand{\even}{\overline{0}}         
\newcommand{\odd}{\overline{1}}          

\newtheorem{rem}[theorem]{Remark}
\makeatletter
\def\ExtendSymbol#1#2#3#4#5{\ext@arrow 0099{\arrowfill@#1#2#3}{#4}{#5}}
\def\RightExtendSymbol#1#2#3#4#5{\ext@arrow 0359{\arrowfill@#1#2#3}{#4}{#5}}
\def\LeftExtendSymbol#1#2#3#4#5{\ext@arrow 6095{\arrowfill@#1#2#3}{#4}{#5}}
\makeatother


\allowdisplaybreaks                

\begin{document}
\begin{title}[Quantum affine superalgebras]
{Universal $R$-matrix of quantum affine $\mathfrak{gl}(1,1)$}
\end{title}
\author{Huafeng Zhang}
\address{Universit{\'e} Paris Diderot - Paris 7,  Institut de Math{\'e}matiques de Jussieu - Paris Rive Gauche CNRS UMR 7586, B\^{a}timent Sophie Germain, Case 7012, 75025 Paris Cedex 13, France}
\email{huafeng.zhang@imj-prg.fr} 
\begin{abstract}
The universal $R$-matrix of the quantum affine superalgebra associated to the Lie superalgebra $\mathfrak{gl}(1,1)$ is realized as the Casimir element of certain Hopf pairing, based on the explicit coproduct formula of all the Drinfeld loop generators.
\end{abstract}

\maketitle
\setcounter{tocdepth}{1}
\section{Introduction}
Let $q$ be a non-zero complex number which is not a root of unity. Let $\Glie := \mathfrak{gl}(1,1)$ be the simplest general linear Lie superalgebra. Let $U = U_q(\Gaff)$ be the associated RTT-type quantum affine superalgebra without central charge and without derivation. As a deformation of the universal enveloping algebra of the loop Lie superalgebra $\Glie[t,t^{-1}]$, this is a Hopf superalgebra neither commutative nor cocommutative. In this paper we study its quasi-triangular structure, namely we compute its {\it universal $R$-matrix}, an invertible element in a completed tensor product $U \widehat{\otimes} U$ satisfying notably (Remark \ref{rem: properties of R-matrix})
\begin{displaymath}
 \mathcal{R} \Delta(x) = \Delta^{\mathrm{cop}}(x) \mathcal{R}  \quad \textrm{for}\ x \in U.
\end{displaymath}

In the non-graded case, the universal $R$-matrix for quantum affine algebras (non-twisted and twisted) has been obtained by Khoroshkin-Tolstoy \cite{KT1} and Damiani \cite{Damiani,Damiani00}. It plays a fundamental r\^{o}le in the representation theory of quantum affine algebras: it was used by Frenkel-Reshetikhin \cite{FR} to define the notion of $q$-character of a finite-dimensional representation, by Frenkel-Hernandez \cite{FH} on Baxter polynomiality of spectra of quantum integrable systems associated to quantum affine algebras, and by Kashiwara et al. on a cyclicity property \cite{Kashiwara} of tensor products of finite-dimensional simple representations and on generalized quantum affine Schur-Weyl duality \cite{Kashiwara1}, to name a few.

In a series of papers \cite{Z,Z1,Z2} devoted to the study of quantum affine superalgebras associated with the Lie superalgebras $\mathfrak{gl}(M,N)$, the highest $\ell$-weight classification of finite-dimensional simple representations, a cyclicity result of tensor products of fundamental representations and $q$-characters were obtained. These results are similar (sometimes simpler) to the non-graded case of quantum affine algebras. We would like to look for explicit formulas of the universal $R$-matrix for these quantum affine superalgebras.

Khoroshkin-Tolstoy \cite{KT} proposed the notion of Cartan-Weyl basis to produce the universal $R$-matrix of quantum groups associated with finite-dimensional contragrediant Lie superalgebras. In the affine case, there are explicit formulas of the universal $R$-matrix for: Yangian double associated to $\mathfrak{gl}(1,1)$ in \cite{CWWX}, to $\mathfrak{osp}(1,2)$ in \cite{ACFR} and to $\mathfrak{gl}(M,N)$ in \cite{RS}; quantum affine superalgebra associated to $\mathfrak{gl}(2,2)$ in \cite{Ga} and to $C_q^{(2)}(2)$ in \cite{IZ}.

In the present paper, we treat the special case $\mathfrak{gl}(1,1)$. As indicated in \cite{Z1}, the quantum affine superalgebra $U$ is the Drinfeld quantum double of a Hopf pairing $\varphi: A \times B \longrightarrow \BC$, where $A,B$ are upper and lower Borel subalgebras respectively. We prove that $\varphi$ is non-degenerate by exhibiting orthonormal bases for $A$ and $B$ with respect to $\varphi$; these bases are formed of ordered products of Drinfeld generators (up to scalar). The universal $R$-matrix of $U$ is the Casimir element for $\varphi$; see Equations \eqref{equ: R matrix KR}-\eqref{equ: R matrix zero part} for precise formulas. 

The arguments in this paper is similar to \cite{Damiani} and simpler; ordered products of Drinfeld generators are already orthogonal, which is not true in the non-graded case for the Cartan loop generators.
In \cite{CWWX}, for the closely related Yangian double $DY(\mathfrak{gl}(1,1))$, universal $R$-matrix was written down (without proof) as the Casimir element of a certain Hopf pairing following \cite{KT2} relating Drinfeld-Jimbo coproduct to Drinfeld new coproduct by a {\it twist} (see also the end of \S \ref{sec: R matrix}).  Our approach does not need any twist, and is more transparent thanks to the nice coproduct formula of Drinfeld generators. The universal $R$-matrix of $U$ looks simpler and is easy to specialize to certain representations.

The general case $\mathfrak{gl}(M,N)$ is still under investigation: analogous Hopf pairing $\varphi$ exists; the coproduct formulas for Drinfeld generators are much more complicated (even for $\mathfrak{gl}_2$). It is question to find similar orthonormal bases for the underlying Hopf pairing.

The plan of this paper is as follows. \S \ref{sec: Hopf superalgebra} proves the coproduct formula for all the Drinfeld generators. \S \ref{sec: Hopf pairing} computes the Hopf pairing $\varphi$ in terms of Drinfeld generators. \S \ref{sec: orthonormal} proves some orthogonal properties of the Hopf pairing, resulting in the universal $R$-matrix written down in \S \ref{sec: R matrix}. As illustrating examples, the Perk-Schultz $R$-matrix, which is used to define the RTT-type quantum affine superalgebra $U$, comes from a specialization of the universal $R$-matrix on natural representations; Baxter polynomiality of transfer matrices on tensor products of finite-dimensional simple representations is deduced in the spirit of Frenkel-Hernandez; Drinfeld new coproduct is realized as a twist of the RTT coproduct.
\section{Hopf superalgebra structure}  \label{sec: Hopf superalgebra}
In this section, based on the Gauss decomposition, we write down the commuting relations among the Drinfeld generators of the quantum affine superalgebra associated to $\mathfrak{gl}(1,1)$. Furthermore, we compute the coproduct for all these Drinfeld generators.

Let $\BV := \BC v_1 \oplus \BC v_2$ be the vector superspace with the $\super$-grading:
\begin{displaymath}
|v_1| = |1| := \even,\quad |v_2| = |2| := \odd.
\end{displaymath}
Set $d_1 := 1, q_1 := q, d_2 := -1$ and $q_2 := q^{-1}$. Recall the {\it Perk-Schultz $R$-matrix}
\begin{eqnarray}  \label{for: Perk-Schultz matrix coefficients}
\begin{array}{rcl}
R(z,w) &=&  \sum\limits_{i=1}^2(zq_i - wq_i^{-1}) E_{ii} \otimes E_{ii}  + (z-w) \sum\limits_{i \neq j} E_{ii} \otimes E_{jj} \\
&\ & + z (q-q^{-1}) E_{21} \otimes E_{12} + w (q^{-1}-q)  E_{12} \otimes E_{21}.
\end{array}  
\end{eqnarray}
Here the $E_{ij} \in \End \BV$ for $i,j = 1,2$ are the linear endomorphisms: $E_{ij} (v_k) = \delta_{jk} v_i$. 

The quantum affine superalgebra $U :=  U_q(\widehat{\mathfrak{gl}(1,1)})$ is the superalgebra defined by the RTT-generators $s_{ij}^{(n)}, t_{ij}^{(n)}$ for $i,j = 1,2$ and $n \in \BZ_{\geq 0}$, with the $\super$-grading $|s_{ij}^{(n)}| = |t_{ij}^{(n)}| = |i| + |j|$, and with the following RTT-relations 
\begin{eqnarray*}
&& R_{23}(z,w) T_{12}(z) T_{13}(w) = T_{13}(w) T_{12}(z) R_{23}(z,w) \in (U \otimes \End \BV^{\otimes 2})((z^{-1},w^{-1})),   \\
&& R_{23}(z,w) S_{12}(z) S_{13}(w) = S_{13}(w) S_{12}(z) R_{23}(z,w) \in (U \otimes \End \BV^{\otimes 2})[[z,w]],   \\
&& R_{23}(z,w) T_{12}(z) S_{13}(w) = S_{13}(w) T_{12}(z) R_{23}(z,w) \in (U \otimes \End \BV^{\otimes 2})((z^{-1},w)),    \\
&& t_{12}^{(0)} = s_{21}^{(0)} = 0, \quad t_{ii}^{(0)} s_{ii}^{(0)} = 1 = s_{ii}^{(0)} t_{ii}^{(0)} \quad \mathrm{for}\ i = 1,2.   \label{rel: FRTS invertibility condition}
\end{eqnarray*}
Here $T(z) = \sum_{i,j =1}^2 t_{ij}(z) \otimes E_{ij} \in (U \otimes \End \BV)[[z^{-1}]]$ and $t_{ij}(z) = \sum_{n \in \BZ_{\geq 0}} t_{ij}^{(n)} z^{-n} \in U[[z^{-1}]]$ (similar convention for $S(z)$ with the $z^{-n}$ replaced by the $z^{n}$). 

$U$ is a Hopf superalgebra with coproduct (set $\epsilon(i,j,k) := (-1)^{(|i|+|k|)(|k|+|j|)}$)
\begin{displaymath}   
\Delta (s_{ij}(z)) =  \sum_{k = 1}^2 \epsilon(i,j,k) s_{ik}(z) \otimes s_{kj}(z),\quad \Delta (t_{ij}(z)) = \sum_{k = 1}^2 \epsilon(i,j,k) t_{ik}(z) \otimes t_{kj}(z).  
\end{displaymath}
It is $\BZ$-graded in such a way that $|s_{ij}^{(n)}|_{\BZ} = n = - |t_{ij}^{(n)}|_{\BZ}$. Introduce 
\begin{displaymath}
\BP := \BZ \epsilon_1 \oplus \BZ \epsilon_2,\quad \alpha := \epsilon_1 - \epsilon_2,\quad \BQ := \BZ \alpha,\quad \BQ_{\geq 0} := \BZ_{\geq 0} \alpha.
\end{displaymath}
Let $(,): \BP \times \BP \longrightarrow \BZ$ be the bilinear form $(\epsilon_i,\epsilon_j) = \delta_{ij} d_i$. Then $U$ is $\BQ$-graded by setting:  $x \in (U)_{\beta}$ if $s_{ii}^{(0)} x (s_{ii}^{(0)})^{-1} = q^{(\beta,\epsilon_i)} x$ for $i = 1,2$. In particular, $|s_{ij}^{(n)}|_{\BQ} = |t_{ij}^{(n)}|_{\BQ} = \epsilon_i - \epsilon_j$. We remark that the $\BZ$-grading and the $\BQ$-grading respect the Hopf superalgebra structure. 

The superalgebra $U$ admits another system of generators, the {\it Drinfeld generators}:
\begin{displaymath}
E_n, \quad F_n, \quad h_s,\quad C_s,\quad (s_{ii}^{(0)})^{\pm 1},\quad \textrm{for}\ n \in \BZ,\ s \in \BZ_{\neq 0} \ \textrm{and}\ i = 1,2. 
\end{displaymath} 
The commuting relations among these generators are as follows:
\begin{itemize}
\item[(D1)] $|(s_{ii}^{(0)})^{\pm 1}|_{\BQ} = |h_s|_{\BQ} = |C_s|_{\BQ} = 0$ and $|E_n|_{\BQ} = \alpha = - |F_n|_{\BQ}$;
\item[(D2)] the $C_s$ are central elements and $[h_s, h_t] = 0$ for $s,t \in \BZ_{\neq 0}$;  
\item[(D3)] for $n \in \BZ$ and $s \in \BZ_{\neq 0}$ we have (denote $[s] := \frac{q^s - q^{-s}}{q-q^{-1}}$)
\begin{displaymath}
[h_s, E_n] = q^s \frac{[s]}{s} E_{n+s},\quad [h_s, F_n] = - q^s \frac{[s]}{s} F_{n+s};
\end{displaymath}
\item[(D4)] $[E_m, F_n] = (q-q^{-1}) (\phi_{m+n}^+ - \phi_{m+n}^-)$ where the $\phi_n^{\pm}$ are defined by
\begin{displaymath}
\phi^{\pm}(z) = \sum_{n\in \BZ} \phi_{n}^{\pm} z^n := ((s_{11}^{(0)})^{-1}s_{22}^{(0)})^{\pm 1} \exp (\pm (q-q^{-1}) \sum_{s > 0} C_{\pm s} z^{\pm s}) \in U[[z^{\pm 1}]];
\end{displaymath}
\item[(D5)] $[E_m,E_n] = [F_m,F_n] = 0$ for $m,n \in \BZ$.
\end{itemize}

Introduce the following formal series with coefficients in $U$:
\begin{align*}
& E^+(z) = \sum_{n \geq 0} E_n z^n,\quad E^-(z) = - \sum_{n \leq -1} E_n z^n,\quad F^+(z) = - \sum_{n \geq 1} F_n z^n,\quad F^-(z) = \sum_{n\leq 0} F_n z^n, \\
& K_1^{\pm}(z) = (s_{11}^{(0)})^{\pm 1} \exp (\pm (q-q^{-1}) \sum_{s > 0} h_{\pm s} z^{\pm s}), \quad K_2^{\pm}(z) = K_1^{\pm}(z) \phi^{\pm}(z). 
\end{align*}
The Drinfeld and RTT generators are related to each other by the Gauss decomposition:
\begin{align*}
& \begin{pmatrix}
s_{11}(z) & s_{12}(z) \\
s_{21}(z) & s_{22}(z)
\end{pmatrix} = \begin{pmatrix}
1 & 0 \\
F^+(z) & 1
\end{pmatrix} \begin{pmatrix}
K_1^+(z) & 0 \\
0 & K_2^+(z) 
\end{pmatrix} \begin{pmatrix}
1 & E^+(z) \\
0 & 1
\end{pmatrix}, \\
& \begin{pmatrix}
t_{11}(z) & t_{12}(z) \\
t_{21}(z) & t_{22}(z)
\end{pmatrix} = \begin{pmatrix}
1 & 0 \\
F^-(z) & 1
\end{pmatrix} \begin{pmatrix}
K_1^-(z) & 0 \\
0 & K_2^-(z) 
\end{pmatrix} \begin{pmatrix}
1 & E^-(z) \\
0 & 1
\end{pmatrix}
\end{align*}
as matrix equations over the superalgebras $U[[z]]$ and $U[[z^{-1}]]$ respectively. Note that the sub-index for the $E,F,C,h,\phi^{\pm}$ refers to the $\BZ$-degree.

The main result of this section is the coproduct formulas for {\it all} the Drinfeld generators.
\begin{prop}  \label{prop: coproduct formula}
The coproduct of Drinfeld generators is as follows:
\begin{align}
& \Delta (E^{\pm}(z)) = 1 \otimes E^{\pm}(z) + E^{\pm}(z) \otimes \phi^{\pm}(z),    \label{equ: coproduct E}  \\
& \Delta (F^{\pm}(z)) = \phi^{\pm}(z) \otimes F^{\pm}(z) + F^{\pm}(z) \otimes 1,    \label{equ: coproduct F}   \\
& \Delta (\phi^{\pm}(z)) = \phi^{\pm}(z) \otimes \phi^{\pm}(z),\quad \Delta (C_s) = 1 \otimes C_s + C_s \otimes 1 \quad \textrm{for}\ s \in \BZ_{\neq 0},    \label{equ: coproduct C}  \\
& \Delta (h_s) = 1 \otimes h_s + h_s \otimes 1 + \frac{q^s [s]}{s(q-q^{-1})} \sum_{i=0}^{s-1} E_i \otimes F_{s-i} \quad \textrm{for}\ s > 0,    \label{equ: coproduct h+}  \\
& \Delta (h_{-s}) = 1 \otimes h_{-s} + h_{-s} \otimes 1 +  \frac{q^{-s}[s]}{s(q^{-1}-q)} \sum_{i=0}^{s-1} E_{-s+i} \otimes F_{-i} \quad \textrm{for}\ s > 0.   \label{equ: coproduct h-}
\end{align}
\end{prop}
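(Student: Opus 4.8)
The plan is to pit the explicit RTT coproduct against the Gauss decomposition. Since $\Delta$ is a homomorphism of superalgebras, I may apply it to each identity expressing an RTT generator through the Drinfeld generators and compare the outcome with the supermatrix formula $\Delta(s_{ij}(z))=\sum_k\epsilon(i,j,k)\,s_{ik}(z)\otimes s_{kj}(z)$ (and its lower-triangular analogue for the $t_{ij}(z)$, obtained by replacing $z$ with $z^{-1}$). Reading off $s_{11}=K_1^+$, $s_{12}=K_1^+E^+$, $s_{21}=F^+K_1^+$ and $s_{22}=F^+K_1^+E^+ + K_2^+$, this turns the proposition into a closed system of four identities in $U\widehat{\otimes}U$ for the unknown coproducts of $K_1^\pm,K_2^\pm,E^\pm,F^\pm$, where the Koszul sign rule of the super tensor product has to be tracked throughout. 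A recurring simplification is that the odd relations $[E_m,E_n]=[F_m,F_n]=0$ force $E^\pm(z)^2=F^\pm(z)^2=0$, so that every cross-term quadratic in $E^\pm$ or in $F^\pm$ drops out.

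First I would treat the diagonal entry. The $(1,1)$-identity gives $\Delta(K_1^\pm)=K_1^\pm\otimes K_1^\pm + K_1^\pm E^\pm\otimes F^\pm K_1^\pm$ (the precise sign of the cross-term being the one forced by $\epsilon(1,1,2)$ together with the super-commutativity of $U\widehat{\otimes}U$), while the Schur-complement expression $K_2^\pm=s_{22}-s_{21}(s_{11})^{-1}s_{12}$ furnished by the $(2,2)$-identity yields $\Delta(K_2^\pm)$. Multiplying the $(1,2)$- and $(2,1)$-identities on the appropriate side by $\Delta(K_1^\pm)^{-1}$ — which exists and is again of the form (diagonal)$\,\times\,$(unipotent) because $E^\pm(z)^2=0$ truncates the relevant geometric series after one step — then isolates $\Delta(E^\pm)=1\otimes E^\pm+E^\pm\otimes\phi^\pm$ and $\Delta(F^\pm)=\phi^\pm\otimes F^\pm+F^\pm\otimes 1$, with $\phi^\pm=(K_1^\pm)^{-1}K_2^\pm$ emerging naturally. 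Feeding $\Delta(K_1^\pm)$ and $\Delta(K_2^\pm)$ back through $\phi^\pm=(K_1^\pm)^{-1}K_2^\pm$ should collapse to the group-like law $\Delta(\phi^\pm)=\phi^\pm\otimes\phi^\pm$. Finally $\Delta(C_s)=1\otimes C_s+C_s\otimes 1$ is immediate: taking the logarithm of $\Delta(\phi^\pm)=\phi^\pm\otimes\phi^\pm$, factoring out the group-like zero mode $((s_{11}^{(0)})^{-1}s_{22}^{(0)})^{\pm1}$ and using that the $C_s$ are central (so the two exponential factors commute) exhibits each $C_s$ as primitive.

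The genuine obstacle is the coproduct of the $h_{\pm s}$. Writing $\Delta(K_1^\pm)=(K_1^\pm\otimes K_1^\pm)(1+Y^\pm)$ with $Y^\pm=E^\pm\otimes(K_1^\pm)^{-1}F^\pm K_1^\pm$, the desired formula must be read off from $\Delta(\log K_1^\pm)=\log\Delta(K_1^\pm)$ via the defining relation $K_1^\pm(z)=(s_{11}^{(0)})^{\pm1}\exp(\pm(q-q^{-1})\sum_{s>0}h_{\pm s}z^{\pm s})$. Here $K_1^\pm$ does not commute with $E^\pm$ (relation (D3)), so $\log\big((K_1^\pm\otimes K_1^\pm)(1+Y^\pm)\big)$ is not simply additive and one is forced into a Baker–Campbell–Hausdorff expansion. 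The saving grace is that $E^\pm(z)^2=F^\pm(z)^2=0$ kill every contribution in which $Y^\pm$ occurs more than once — for instance one checks directly that $Y^\pm Y^\pm=0$ and $Y^\pm\big(\log K_1^\pm\otimes 1 + 1\otimes\log K_1^\pm\big)Y^\pm=0$ — so that the whole series collapses to $\log(K_1^\pm\otimes K_1^\pm)$ plus a single term linear in $Y^\pm$, dressed by the adjoint action of $\log(K_1^\pm\otimes K_1^\pm)$. Resumming that adjoint action through (D3) and extracting the coefficient of $z^{\pm s}$ is exactly what produces the finite sum $\frac{q^{s}[s]}{s(q-q^{-1})}\sum_{i=0}^{s-1}E_i\otimes F_{s-i}$ and its negative-mode counterpart; pinning down the scalar and the index range $0\le i\le s-1$ uniformly in the two signs is where the real bookkeeping lies. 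As a consistency check one can instead posit the ansatz $\Delta(h_s)=1\otimes h_s+h_s\otimes 1+X_s$ with $X_s\in (U)_{\alpha}\widehat{\otimes}(U)_{-\alpha}$ and determine $X_s$ from $\Delta([h_s,E_n])=[\Delta(h_s),\Delta(E_n)]$ using the already-established $\Delta(E^\pm)$.
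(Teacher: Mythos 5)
Your overall strategy is sound and, for the ``easy'' formulas \eqref{equ: coproduct E}--\eqref{equ: coproduct C}, close in spirit to what the paper does (the paper quotes \cite{CWWX1} and illustrates the method by an induction on $n$ for $\Delta(E_n)$ using $[h_1,E_n]=qE_{n+1}$, rather than inverting the matrix identities as you propose; both routes work, and both exploit $E^{\pm}(z)^2=F^{\pm}(z)^2=0$). Where you genuinely diverge is on \eqref{equ: coproduct h+}--\eqref{equ: coproduct h-}. You take the logarithm of $\Delta(K_1^{\pm})=(K_1^{\pm}\otimes K_1^{\pm})(1+Y^{\pm})$ and control it by Baker--Campbell--Hausdorff; the paper instead exponentiates the \emph{claimed} answer and shows that $\widetilde{\mu}(z)=\exp((q-q^{-1})\sum_{s>0}\Delta(h_s)z^s)$ satisfies the first-order equation $\frac{d}{dz}\widetilde{\mu}=\widetilde{\mu}\,\mu'$ with the right constant term, after checking that the coefficients $\mu_s$ commute. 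The two arguments hinge on the same phenomenon (quadratic expressions in $E^+(z)$, resp.\ $F^+(z)$, at equal spectral parameter vanish by (D5)); the paper's ODE formulation has the advantage that one never has to resum an adjoint action, only to verify a scalar identity ($\sum_{s=i+1}^{l+1}c_{l+1-s}q^s[s]=q^{i+1}[i+1]+q(l-i)$ with $\sum c_mz^m=\frac{1-q^2z}{1-z}$), whereas your BCH route still owes exactly that computation --- you correctly identify it as ``where the real bookkeeping lies,'' but it is the decisive step and is not carried out.

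One soft spot in your BCH reduction: the two identities you cite, $Y^{\pm}Y^{\pm}=0$ and $Y^{\pm}(\log K_1^{\pm}\otimes 1+1\otimes\log K_1^{\pm})Y^{\pm}=0$, do not by themselves dispose of every BCH term containing $Y^{\pm}$ at least twice; a bracket such as $[Y,[X,[X,Y]]]$ expands into words like $YX^2Y$ and $XYXY$ whose individual vanishing does not follow from those two relations (e.g.\ $E^+(z)H(z)E^+(z)$ need not vanish). The clean statement that does the job is: any two elements of the form $\bigl(\sum_m a_mE_m\bigr)\otimes\bigl(\sum_n b_nF_n\bigr)$ super-commute, because by (D5) both $\sum a_mc_kE_mE_k$ and $\sum b_nd_lF_nF_l$ are antisymmetric under reversing the order of the factors, so the two Koszul products coincide; since $\mathrm{ad}(\log K_1^{\pm}\otimes 1+1\otimes\log K_1^{\pm})$ preserves this space by (D3), every nested bracket with two or more $Y^{\pm}$'s vanishes. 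This is precisely the content of the paper's Claim 2 in its own formulation, so the gap is fillable, but as written your justification is incomplete.
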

\begin{proof}
Equations \eqref{equ: coproduct E}-\eqref{equ: coproduct C} were proved in \cite{CWWX1}. As the idea is simple, for completeness we give a proof to the formulas $\Delta(E_n)$ with $n \geq 0$.  From the Gauss decomposition:
\begin{displaymath}
\Delta(h_1) = 1 \otimes h_1 + h_1 \otimes 1 + \frac{q}{q-q^{-1}} E_0 \otimes F_1, \quad \Delta(E_0) = 1 \otimes E_0 + E_0 \otimes \phi_0^+.
\end{displaymath}
Assume the formula $\Delta(E_n)$. Since $[h_1,E_n] = qE_{n+1}$, we have
\begin{eqnarray*}
\Delta(qE_{n+1}) &=& [1 \otimes h_1 + h_1 \otimes 1 + \frac{q}{q-q^{-1}} E_0 \otimes F_1, 1 \otimes E_n + \sum_{i=0}^n E_i \otimes \phi^+_{n-i}]  \\
&=& 1 \otimes qE_{n+1} + \sum_{i=0}^{n} qE_{i+1} \otimes \phi^+_{n-i}  + \frac{q}{q-q^{-1}} E_0 \otimes [F_1,E_n]  \\
&=& q(1 \otimes E_{n+1} + \sum_{i=0}^{n+1} E_i \otimes \phi^+_{n+1-i}), 
\end{eqnarray*}
giving the desired formula for $\Delta(E_{n+1})$. 

Let us prove Equation \eqref{equ: coproduct h+}. (The idea applies perfectly well to Equation \eqref{equ: coproduct h-}!) Introduce the following power series with coefficients in $U^{\otimes 2}$
\begin{eqnarray*}
\mu(z) &:=& \sum_{s > 0} (q-q^{-1}) (1 \otimes h_s + h_s \otimes 1 + \frac{q^s[s]}{s(q-q^{-1})} \sum_{i=0}^{s-1} E_i \otimes F_{s-i} ) z^s =: \sum_{s > 0} \mu_s z^s, \\
\widetilde{\mu}(z) &:=& \exp ((q-q^{-1})\sum_{s>0} \Delta(h_s) z^s),\quad \mu'(z) := \frac{d}{dz} \mu(z).
\end{eqnarray*}

\noindent {\it Claim 1.} For $s, t \in \BZ_{>0}$, $\mu_s \mu_t = \mu_t \mu_s \in U^{\otimes 2}$.

This follows from a straightforward calculation. Equation \eqref{equ: coproduct h+} then becomes
\begin{displaymath}
(a): \quad \frac{d}{dz} \widetilde{\mu}(z) = \widetilde{\mu}(z) \mu'(z) \in U^{\otimes 2}[[z]].
\end{displaymath} 
Set $H(z) := \sum_{s>0} (q-q^{-1}) h_s z^s \in U[[z]]$. By definition, we have 
\begin{displaymath}
\widetilde{\mu}(z) = e^{H(z)} \otimes e^{H(z)} - q e^{H(z)} E^+(z) \otimes F^+(z) e^{H(z)}.
\end{displaymath}
Now (a) left multiplied by $e^{-H(z)} \otimes 1$ and right multiplied by $q^{-1} \otimes e^{-H(z)}$ becomes:
\begin{displaymath} 
 ([\frac{d H(z) }{dz}, E^+(z)] + \frac{d E^+(z)}{dz}) \otimes F^+(z) + E^+(z) \otimes \frac{d F^+(z)}{dz} =  (E^+(z) \otimes F^+(z) - q^{-1}) x(z).  
\end{displaymath}
 Here $x(z) := \sum\limits_{s>0} z^{s-1}q^s [s] \sum_{i=0}^{s-1} E_i \otimes e^{H(z)} F_{s-i}e^{-H(z)}$. By Relations (D2)-(D3) 
\begin{eqnarray*}
&& e^{H(z)} F_n e^{-H(z)} = \sum_{m \geq 0} c_m F_{n+m} z^m \quad \textrm{with\ the\ $c_m$\ defined\ by}   \\
&& \sum_{m\geq 0} c_m z^m := \exp ((q-q^{-1}) \sum_{s>0} \frac{-q^s [s]}{s} z^s ) = \exp (\sum_{s>0} \frac{1-q^{2s}}{s} z^s) = \frac{1-q^2z}{1-z}.
\end{eqnarray*}
It follows that 
\begin{eqnarray*}
x(z) &=& \sum_{s>0} z^{s-1}q^s [s] \sum_{i=0}^{s-1} \sum_{m\geq 0} E_i \otimes F_{m+s-i} c_m z^m = \sum_{l\geq 0} z^l \sum_{i=0}^l x_{i,l} E_i \otimes F_{l+1-i},  \\ 
x_{i,l} &=& \sum_{s=i+1}^{l+1} c_{l+1-s} q^s [s] = q^{i+1}[i+1] + q(l-i) \quad \textrm{for}\ 0 \leq i \leq l.
\end{eqnarray*}

\noindent {\it Claim 2.} We have $(E^+(z) \otimes F^+(z)) x(z) = 0 \in U^{\otimes 2}[[z]]$.

Let $0 \leq j < k$ and $1 \leq a < b$. Consider the term $E_j E_k \otimes F_a F_b$ appearing at the LHS: 
\begin{eqnarray*}
&\ & (E_j \otimes F_a) (E_k \otimes F_b) x_{k,b+k-1} z^{a+b+j+k-1} + (E_j \otimes F_b) (E_k \otimes F_a) x_{k,a+k-1} z^{a+b+j+k-1} \\
&+& (E_k \otimes F_a)(E_j \otimes F_b) x_{j,b+j-1} z^{a+b+j+k-1} + (E_k \otimes F_b)(E_j \otimes F_a) x_{j,a+j-1} z^{a+b+j+k-1}  \\
&=& E_jE_k \otimes F_aF_b z^{a+b+j+k-1} (-x_{k,b+k-1} + x_{k,a+k-1} + x_{j,b+j-1} - x_{j,a+j-1}). 
\end{eqnarray*}
It is straightforward to check that $-x_{k,b+k-1} + x_{k,a+k-1} + x_{j,b+j-1} - x_{j,a+j-1} = 0$. Claim 2 therefore follows. Equation (a) becomes:
\begin{eqnarray*} 
 (b): [ \frac{d H(z) }{dz}, E^+(z)] \otimes F^+(z) + \frac{d E^+(z)}{dz} \otimes F^+(z) + E^+(z) \otimes \frac{d F^+(z)}{dz} = - q^{-1} x(z).
\end{eqnarray*}
By using $[h_s, E_n] = \frac{q^s[s]}{s}E_{n+s}$ we get (after direct calculations)
\begin{displaymath}
[\frac{d H(z)}{dz}, E^+(z)] + \frac{d E^+(z)}{dz} = \sum_{a\geq 0} q^{a+2} [a+1] E_{a+1} z^a.
\end{displaymath}
Now Equation (b) follows from: $q^{a+1}[a]+b = q^{-1}x_{a,a+b-1}$ for $a \geq 0, b > 0$.
\end{proof}
Compared to \cite{CWWX1}, Equations \eqref{equ: coproduct h+}-\eqref{equ: coproduct h-} are simpler as the Cartan loop generators $h_s$ are chosen differently, which will simplify the universal $R$-matrix in \S \ref{sec: R matrix}. 
\section{Quantum double}  \label{sec: Hopf pairing}
In this section, we recall the quantum double construction of $U$ and compute explicitly the associated Hopf pairing in terms of Drinfeld generators.

Let $A$ (resp. $B$) be the subalgebra of $U$ generated by the $s_{ij}^{(n)}, t_{ii}^{(0)}$ (resp. the $t_{ij}^{(n)}, s_{ii}^{(0)}$). Then $A$ and $B$ are sub-Hopf-superalgebras of $U$.  In terms of Drinfeld generators, $A$ (resp. $B$) is generated by the $(s_{ii}^{(0)})^{-1}$ (resp. the $(t_{ii}^{(0)})^{-1}$) and the coefficients of the $E^+(z),F^+(z),K_1^+(z),\phi^+(z)$ (resp. with $+$ replaced by $-$). 

There exists a Hopf pairing $\varphi: A \times B \longrightarrow \BC$, which is an even bilinear form satisfying 
\begin{eqnarray*}
&&\varphi(a,bb') = (-1)^{|b||b'|} \varphi(a_{(1)},b) \varphi(a_{(2)},b'),\quad \varphi(a,1) = \varepsilon(a);   \\
&&\varphi(aa',b) = \varphi(a',b_{(1)}) \varphi(a,b_{(2)}),\quad \varphi(1,b) = \varepsilon(b)
\end{eqnarray*}
for homogeneous $a,a' \in A$ and $b,b' \in B$. $\varphi$ is determined by \cite[Proposition 3.10]{Z1}
\begin{equation}   \label{equ: quantum double}
\sum_{i,j,a,b = 1}^2 \varphi(s_{ij}(w),t_{ab}(z))  E_{ab} \otimes E_{ij} = \frac{R(z,w)}{zq-wq^{-1}} \in \End \BV^{\otimes 2}[[\frac{w}{z}]].
\end{equation}
$\varphi$ makes the tensor product $A \otimes B$ into a Hopf superalgebra $\mathcal{D}_{\varphi}(A,B)$, called {\it quantum double}. $U$ is the quotient of $\mathcal{D}_{\varphi}(A,B)$ by the Hopf ideal generated by the $s_{ii}^{(0)} \otimes 1 - 1 \otimes s_{ii}^{(0)}$.

We remark that $\varphi: A \times B \longrightarrow \BC$ respects the $\BZ$-grading and the $\BQ$-grading. In other words, let $\beta,\gamma \in \BZ$ or $\BQ$ and let $x \in (A)_{\beta}, y \in (B)_{\gamma}$. Then $\varphi(x,y) = 0$ if $\beta + \gamma \neq 0$.

\begin{prop}   \label{prop: Hopf pairing Drinfeld generators}
The Hopf pairing $\varphi: A \times B \longrightarrow \BC$ satisfies:
\begin{align}
& \varphi(K_1^+(w), K_1^-(z)) = 1 = \varphi(\phi^+(w), \phi^-(z)),     \label{equ: KK,CC pairing}   \\
& \varphi(E^+(w), F^-(z)) = \frac{(q-q^{-1})z}{z-w}, \quad    \varphi(\phi^+(w), K_1^-(z)) = \frac{z-w}{qz-q^{-1}w},   \label{equ: EF CK pairing}  \\
& \varphi(F^+(w), E^-(z)) = \frac{(q^{-1}-q)w}{z-w}, \quad   \varphi(K_1^+(w), \phi^-(z)) = \frac{q^{-1}z - qw}{z-w}.  \label{equ: FE KC pairing}  
\end{align}
\end{prop}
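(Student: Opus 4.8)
The plan is to pin down all six pairings $\varphi(s_{ij}(w),t_{ab}(z))$ from the defining equation \eqref{equ: quantum double}, and then to invert the Gauss decomposition by peeling off Drinfeld generators one at a time with the Hopf-pairing axioms, using the coproduct formulas of Proposition \ref{prop: coproduct formula} together with the $\BQ$- and $\BZ$-gradings to discard all but a handful of summands. Reading off the matrix coefficients of $R(z,w)/(zq-wq^{-1})$ in \eqref{equ: quantum double} gives $\varphi(s_{11}(w),t_{11}(z))=1$, $\varphi(s_{22}(w),t_{22}(z))=\frac{zq^{-1}-wq}{zq-wq^{-1}}$, $\varphi(s_{11}(w),t_{22}(z))=\varphi(s_{22}(w),t_{11}(z))=\frac{z-w}{zq-wq^{-1}}$, $\varphi(s_{12}(w),t_{21}(z))=\frac{z(q-q^{-1})}{zq-wq^{-1}}$ and $\varphi(s_{21}(w),t_{12}(z))=\frac{w(q^{-1}-q)}{zq-wq^{-1}}$. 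Substituting the Gauss coordinates $s_{11}=K_1^+$, $s_{12}=K_1^+E^+$, $s_{21}=F^+K_1^+$, $s_{22}=F^+K_1^+E^++K_1^+\phi^+$ (and the analogues with $-$ for the $t_{ab}$) turns each of these into a pairing of ordered products of Drinfeld generators.

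To carry out the peeling I also need $\Delta(K_1^{\pm})$, which is not listed in Proposition \ref{prop: coproduct formula} but is obtained from $\Delta(s_{11})$ (resp. $\Delta(t_{11})$) through the Gauss decomposition; it equals $K_1^{\pm}\otimes K_1^{\pm}$ plus a correction term of $\BQ$-degree $\alpha\otimes(-\alpha)$. Applying $\varphi(aa',b)=\varphi(a',b_{(1)})\varphi(a,b_{(2)})$ and $\varphi(a,bb')=(-1)^{|b||b'|}\varphi(a_{(1)},b)\varphi(a_{(2)},b')$, the $\BQ$-grading annihilates every summand whose two arguments do not carry opposite $\alpha$-degree (for instance $\varphi(K_1^+,F^-K_1^-)=0$ and $\varphi(E^+,K_1^-)=0$), while the relations $E^{\pm}(z)^2=F^{\pm}(z)^2=0$, forced by the vanishing super-brackets (D5) and the odd parity of the $E_n,F_n$, kill any summand containing a repeated $E$ or $F$. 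What remains is a triangular system: $\varphi(s_{11},t_{11})$ gives $\varphi(K_1^+,K_1^-)=1$; $\varphi(s_{22},t_{11})$ collapses to $\varphi(\phi^+,K_1^-)$, fixing $\varphi(\phi^+(w),K_1^-(z))=\frac{z-w}{qz-q^{-1}w}$; $\varphi(s_{12},t_{21})$ collapses to $\varphi(E^+,F^-)\,\varphi(\phi^+,K_1^-)$ and $\varphi(s_{21},t_{12})$ to $\varphi(F^+,E^-)\,\varphi(\phi^+,K_1^-)$, which with the known $\varphi(\phi^+,K_1^-)$ fix $\varphi(E^+(w),F^-(z))$ and $\varphi(F^+(w),E^-(z))$; and finally $\varphi(s_{11},t_{22})$ collapses to $\varphi(K_1^+,\phi^-)-\varphi(E^+,F^-)\varphi(F^+,E^-)\varphi(\phi^+,K_1^-)$, determining $\varphi(K_1^+(w),\phi^-(z))$. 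Each resulting rational identity is elementary, e.g. $(z-w)^2-(q-q^{-1})^2zw=(q^{-1}z-qw)(qz-q^{-1}w)$ is exactly what converts this last relation into $\frac{q^{-1}z-qw}{z-w}$.

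The remaining and most laborious point is $\varphi(\phi^+(w),\phi^-(z))=1$. I would extract it from the last RTT pairing $\varphi(s_{22}(w),t_{22}(z))$: expanding both $s_{22}$ and $t_{22}$ produces four cross terms, and after the same grading- and nilpotency-based cancellations the expression reduces to a combination of the four pairings already computed multiplied by $\varphi(\phi^+,\phi^-)$, so comparison with $\frac{zq^{-1}-wq}{zq-wq^{-1}}$ forces $\varphi(\phi^+,\phi^-)=1$; alternatively, since the $\phi^{\pm}(z)$ are group-like, one can argue directly from the pairing of the zero modes $\phi_0^{\pm}=((s_{11}^{(0)})^{-1}s_{22}^{(0)})^{\pm1}$ and the primitivity of the $C_s$. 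Throughout, the main obstacle I expect is sign bookkeeping rather than conceptual difficulty: one must propagate the Koszul factors $(-1)^{|b||b'|}$ and the signs hidden inside the iterated coproducts consistently, and check that every term not immediately removed by the $\BQ$-grading either vanishes through $E^{\pm}(z)^2=F^{\pm}(z)^2=0$ or reassembles into one of the target pairings.
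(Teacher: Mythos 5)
Your proposal is correct, and it shares the paper's overall framework: read off the six entries of the defining pairing \eqref{equ: quantum double}, substitute the Gauss decomposition, and peel off Drinfeld factors with the Hopf-pairing axioms, using the coproduct formulas and the $\BQ$-grading to discard cross terms. The one genuinely delicate point, however, is resolved by a different device. Both arguments reduce $\varphi(s_{12}(w),t_{21}(z))$ to the product $\varphi(E^+(w),F^-(z))\,\varphi(\phi^+(w),K_1^-(z))$ --- one equation in two unknowns. The paper splits it by invoking relation (D4) to write $\phi^+(w)=\phi_0^+ + \tfrac{w}{q-q^{-1}}\bigl(E^+(w)F_1+F_1E^+(w)\bigr)$ and pairing this against $K_1^-(z)$; you instead extract $\varphi(\phi^+(w),K_1^-(z))$ directly from the diagonal entry $\varphi(s_{22}(w),t_{11}(z))=\tfrac{z-w}{qz-q^{-1}w}$, the term $\varphi(F^+K_1^+E^+,K_1^-)$ dying by the $\BQ$-grading. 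Your route stays entirely inside the RTT data and treats all six entries uniformly (the paper details only \eqref{equ: EF CK pairing} and declares \eqref{equ: FE KC pairing} ``parallel''), at the cost of one extra rational identity for $\varphi(K_1^+,\phi^-)$; the paper's route uses one commutation relation but fewer RTT entries. Two minor remarks: the nilpotency $E^{\pm}(z)^2=F^{\pm}(z)^2=0$ you invoke is never actually needed, since every summand you wish to discard already vanishes for $\BQ$-degree reasons; and, as you anticipate, the Koszul signs in the iterated expansion of $\varphi(s_{11},t_{22})$ are treacherous, but the relation you state --- equivalent to $(z-w)^2-(q-q^{-1})^2zw=(q^{-1}z-qw)(qz-q^{-1}w)$ --- is the correct one and does reproduce \eqref{equ: FE KC pairing}. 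Your fallback for $\varphi(\phi^+(w),\phi^-(z))=1$ via group-likeness, the zero-mode values $\varphi(s_{ii}^{(0)},t_{jj}^{(0)})=q^{-1+(\epsilon_i,\epsilon_j)}$ and the primitivity of the $C_s$ is exactly the paper's argument for \eqref{equ: KK,CC pairing}.
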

\begin{proof}
As $K_1^+(w) = s_{11}(w)$ and $K_1^-(z) = t_{11}(z)$, we have by Equation \eqref{equ: quantum double}
\begin{displaymath}
\varphi(K_1^+(w), K_1^-(z)) = \varphi(s_{11}(w), t_{11}(z)) = 1,\quad \varphi(s_{ii}^{(0)}, t_{jj}^{(0)}) = q^{-1 + (\epsilon_i,\epsilon_j)} \quad \textrm{for}\ i,j = 1,2.
\end{displaymath}
It follows that $\varphi(\phi_0^+,\phi_0^-) = \varphi((s_{11}^{(0)})^{-1} s_{22}^{(0)}, (t_{11}^{(0)})^{-1}t_{22}^{(0)}) = 1$. More generally, $\varphi(\phi_m^+,\phi_{-n}^-) = \delta_{m,0}\delta_{n,0}$ in view of Equation \eqref{equ: coproduct C} and the compatibility of $\BQ$-grading with $\varphi$. 

Consider now Equation \eqref{equ: EF CK pairing}. Let us set 
\begin{displaymath}
\varphi(\phi^+(w),K_1^-(z)) = g(\frac{w}{z}),\quad \varphi(E^+(w),F^-(z)) = h(\frac{w}{z}).
\end{displaymath}
In Equation \eqref{equ: quantum double}, by taking $(i,j,a,b) = (1,2,2,1)$ we get
\begin{displaymath}
(a): \quad \varphi(K_1^+(w)E^+(w), F^-(z)K_1^-(z)) = \varphi(s_{12}(w),t_{21}(z)) = \frac{(q-q^{-1})z}{qz - q^{-1}w}.
\end{displaymath}
The first equality comes from the Gauss decomposition. From the coproduct formula
\begin{eqnarray*}
\Delta(K_1^+(w)E^+(w)) &=& K_1^+(w) E^+(w) \otimes K_1^+(w) \phi^+(w) - s_{12}(w) \otimes s_{21}(w) E^+(w)  \\
&\ & + K_1^+(w) \otimes K_1^+(w) E^+(w) + s_{12}(w)E^+(w) \otimes s_{21}(w) \phi^+(w)
\end{eqnarray*}
it follows that the LHS of Equation (a) takes the form
\begin{eqnarray*}
LHS(a) &=& \varphi(K_1^+(w) E^+(w), F^-(z)) \varphi(K_1^+(w) \phi^+(w), K_1^-(z))  \\
&\ & - \varphi(s_{12}(w),F^-(z))\varphi(s_{21}(w)E^+(w),K_1^-(z))
\end{eqnarray*}
The coproduct formula for $K_1^-(z), F^-(z)$ implies that $\varphi(s_{21}(w)E^+(w),K_1^-(z)) = 0$ and
\begin{align*}
& \varphi(K_1^+(w) \phi^+(w), K_1^-(z)) = \varphi(\phi^+(w),K_1^-(z)) \varphi(K_1^+(w),K_1^-(z)) = g(\frac{w}{z}), \\
& \varphi(K_1^+(w) E^+(w), F^-(z)) = \varphi(E^+(w),F^-(z))\varphi(K_1^+(w),1) = h(\frac{w}{z}).
\end{align*}
Henceforth Equation (a) gives rise to the following equation:
\begin{displaymath}
(b):\quad g(\frac{w}{z}) h(\frac{w}{z}) = \frac{(q-q^{-1})z}{qz-q^{-1}w}.
\end{displaymath}
Let us compute $g(\frac{w}{z})$ in a different way by using
\begin{displaymath}
\phi^+(w) = \phi_0^+ + \frac{w}{q-q^{-1}} (E^+(w) F_1 + F_1 E^+(w)).
\end{displaymath}
The coproduct formula for $K_1^-(z)$ implies that $\varphi(F_1E^+(w),K_1^-(z)) = 0$ and
\begin{eqnarray*}
g(\frac{w}{z}) &=& \varphi(\phi_0^+,K_1^-(z)) + \frac{w}{q-q^{-1}} \varphi(E^+(w)F_1, K_1^-(z)) \\
&=& q^{-1} + \frac{w}{q^{-1}-q} \varphi(F_1,t_{12}(z)) \varphi(E^+(w),t_{21}(z))  \\
&=& q^{-1} + \frac{w}{q^{-1}-q} \varphi(F_1,t_{12}^{(1)} z^{-1}) \varphi(E^+(w),F^-(z)K_1^-(z)).
\end{eqnarray*}
From the coproduct formula for $E^+(w)$ and from $F_1 = - s_{21}^{(1)} (s_{11}^{(0)})^{-1}$,  
\begin{align*}
& \varphi(E^+(w),F^-(z)K_1^-(z)) = \varphi(E^+(w),F^-(z)) \varphi(\phi^+(w),K_1^-(z)) = g(\frac{w}{z})h(\frac{w}{z}) =  \frac{(q-q^{-1})z}{qz-q^{-1}w}, \\
& \varphi(F_1,t_{12}^{(1)}) = - \varphi(s_{21}^{(1)} (s_{11}^{(0)})^{-1}, t_{21}^{(1)} ) 
= - \varphi((s_{11}^{(0)})^{-1}, t_{11}^{(0)}) \varphi(s_{21}^{(1)}, t_{21}^{(1)}) = - \frac{q^{-1}-q}{q}, \\
& g(\frac{w}{z}) = q^{-1} - \frac{w}{z} \frac{1}{q^{-1}-q} \frac{q^{-1}-q}{q}  \frac{(q-q^{-1})z}{qz-q^{-1}w} = \frac{z-w}{qz-q^{-1}w}.
\end{align*}
Now Equation (b) gives us the desired formula for $h(\frac{w}{z})$. Equation \eqref{equ: EF CK pairing} is proved. 

The proof of Equation \eqref{equ: FE KC pairing} is parallel to that of Equation \eqref{equ: EF CK pairing}.
\end{proof}
In terms of Drinfeld generators, Proposition \ref{prop: Hopf pairing Drinfeld generators} becomes
\begin{cor}   \label{cor: explicit Hopf pairing Drinfeld generators}
Let $m,n \in \BZ_{\geq 0}$ and $s,t \in \BZ_{>0}$. We have
\begin{align*}
& \varphi(h_s, h_{-t}) = \varphi(C_s, C_{-t}) = 0,\quad \varphi(h_s, C_{-t}) = \delta_{st} \frac{q^s[s]}{s(q-q^{-1})},\quad \varphi(C_s,h_{-t}) = \delta_{st} \frac{q^{-s}[s]}{s(q-q^{-1})}, \\
& \varphi(E_m, F_{-n}) = \delta_{mn} (q-q^{-1}), \quad \varphi(F_s,E_{-t}) = \delta_{st} (q^{-1}-q). 
\end{align*}
\end{cor}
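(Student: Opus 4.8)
The plan is to read off the component pairings by expanding the rational generating-function identities of Proposition \ref{prop: Hopf pairing Drinfeld generators} as power series in $w/z$ and matching coefficients, the only real work being to reduce the exponential series $K_1^{\pm},\phi^{\pm}$ to their linear (logarithmic) parts. I would start with the $E$--$F$ pairings, which are immediate because $E^{\pm}(z),F^{\pm}(z)$ are already linear in the Drinfeld generators. Writing $F^-(z)=\sum_{k\geq 0}F_{-k}z^{-k}$ and $E^-(z)=-\sum_{k\geq 1}E_{-k}z^{-k}$, and expanding $\frac{(q-q^{-1})z}{z-w}=(q-q^{-1})\sum_{j\geq 0}(w/z)^j$ and $\frac{(q^{-1}-q)w}{z-w}=(q^{-1}-q)\sum_{j\geq 1}(w/z)^j$ in $\End\BV^{\otimes 2}[[w/z]]$, the identities \eqref{equ: EF CK pairing} and \eqref{equ: FE KC pairing} give at once $\varphi(E_m,F_{-n})=\delta_{mn}(q-q^{-1})$ and $\varphi(F_s,E_{-t})=\delta_{st}(q^{-1}-q)$ after comparing coefficients of $w^mz^{-k}$.

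The Cartan pairings require the single structural input that $\varphi$ respects the $\BQ$-grading. The correction term in $\Delta(h_{\pm s})$ is a sum of $E_i\otimes F_j$, whose two tensor legs carry $\BQ$-degrees $\pm\alpha$; likewise the non-grouplike part of $\Delta(K_1^{\pm}(z))=K_1^{\pm}(z)\otimes K_1^{\pm}(z)-K_1^{\pm}(z)E^{\pm}(z)\otimes F^{\pm}(z)K_1^{\pm}(z)$ has legs of $\BQ$-degree $\pm\alpha$. Since every factor in sight ($h_{\pm s}$, $C_{\pm s}$, $s_{11}^{(0)}$, $\phi_0^{\pm}$, and their products) is of $\BQ$-degree $0$, all these correction terms are annihilated when fed into $\varphi$. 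Consequently, \emph{for the purpose of these pairings}, each $h_{\pm s}$ (and each primitive central $C_{\pm s}$) behaves as a primitive element, so $\varphi(\,\cdot\,,b)$ and $\varphi(a,\,\cdot\,)$ act as derivations relative to the counit $\varepsilon$ and vanish on products of two or more augmentation-ideal elements; and each of $K_1^{\pm}(z),\phi^{\pm}(z)$ behaves as a grouplike element, so that $\varphi(\,\cdot\,,g)$ and $\varphi(g,\,\cdot\,)$ are algebra homomorphisms on the commutative subalgebras generated by the $h$'s and the $C$'s. In particular one obtains the multiplicativity $\varphi(\exp X,g)=\exp\varphi(X,g)$ that converts the exponential series into their logarithms.

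With this in hand I would treat the four identities \eqref{equ: KK,CC pairing}--\eqref{equ: FE KC pairing} involving $K_1^{\pm},\phi^{\pm}$ uniformly. First peel off the grouplike prefactor $s_{11}^{(0)}$ or $\phi_0^{\pm}$ by evaluating the identity at $w=0$; for instance $\varphi(s_{11}^{(0)},\phi^-(z))=q^{-1}$ and $\varphi(s_{11}^{(0)},K_1^-(z))=1$, these being the $w=0$ values of the right-hand sides (the prefactors are $z$-independent by the $\BZ$-grading). Dividing out the prefactor and taking logarithms reduces each identity to a statement of the form $\varphi(H^{\pm}(w),\ast(z))=\log(\text{rational})$, where $H^+(w)=(q-q^{-1})\sum_{s>0}h_sw^s$ and $\ast$ is the relevant logarithmic series; using the derivation property to collapse, e.g., $\varphi(h_s,K_1^-(z))=\varphi(h_s,H^-(z))=-(q-q^{-1})\sum_t\varphi(h_s,h_{-t})z^{-t}$ and its analogues, each left-hand side becomes $-(q-q^{-1})^2\sum_{s,t}\varphi(\bullet_s,\bullet_{-t})w^sz^{-t}$. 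Matching this against $\log\frac{1-aw/z}{1-bw/z}=\sum_{k\geq 1}\frac{b^k-a^k}{k}(w/z)^k$ for the appropriate $a,b\in\{1,q^{\pm 2}\}$ forces the factor $\delta_{st}$ and yields $\varphi(h_s,h_{-t})=0=\varphi(C_s,C_{-t})$, together with $\varphi(h_s,C_{-t})=\delta_{st}\frac{q^{2s}-1}{s(q-q^{-1})^2}$ and $\varphi(C_s,h_{-t})=\delta_{st}\frac{1-q^{-2s}}{s(q-q^{-1})^2}$; rewriting via $[s]=\frac{q^s-q^{-s}}{q-q^{-1}}$ reproduces the stated formulas. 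The main obstacle is precisely the reduction of the preceding paragraph: one must verify that the $\BQ$-grading kills every coproduct correction so that the ``primitive/grouplike'' heuristic is rigorously valid, and that the exponential-to-logarithm passage is legitimate coefficientwise in the formal variables $w$ and $z^{-1}$.
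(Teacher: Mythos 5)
Your proposal is correct and is exactly the argument the paper leaves implicit: the corollary is stated as a direct reformulation of Proposition \ref{prop: Hopf pairing Drinfeld generators}, obtained by expanding the generating series in $w/z$, and your reduction of $K_1^{\pm},\phi^{\pm}$ to their logarithms via the $\BQ$-grading (which kills the $E\otimes F$ corrections in $\Delta(h_{\pm s})$ and $\Delta(K_1^{\pm}(z))$, so that these elements behave as primitive/grouplike against degree-zero arguments) is the right justification for that step. All the constants check out, e.g. $\frac{q^{2s}-1}{s(q-q^{-1})^2}=\frac{q^{s}[s]}{s(q-q^{-1})}$.
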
 
There are other choices of Hopf pairing (for example, by replacing the denominator at the RHS of Equation \eqref{equ: quantum double} with $zq^{-1}-wq$ or more generally with $z x - w y$ where $x,y \in \BC^{\times}$), which should lead to slightly different universal $R$-matrices. 
\section{Orthogonal PBW bases}   \label{sec: orthonormal}
In this section, we prove some orthogonal properties of the Hopf pairing $\varphi: A \times B \longrightarrow \BC$.

Let $\CB$ be the following totally ordered subset of $A$:
\begin{eqnarray*}
& \cdots < (\phi_0^+)^{-1} F_n <  (\phi_0^+)^{-1} F_{n-1} < \cdots < (\phi_0^+)^{-1} F_{3} < (\phi_0^+)^{-1} F_{2} < (\phi_0^+)^{-1} F_{1}   \\
& < h_1< h_2 < h_3 < \cdots < h_s < h_{s+1} < \cdots < C_1 < C_2 < C_3 \cdots < C_s < C_{s+1} < \cdots  \\
& < E_0 < E_1 < E_2 < \cdots < E_n < E_{n+1} < \cdots.
\end{eqnarray*}
Let $\CB_-,\CB_0$ and $\CB_+$ be the totally ordered subset of $\CB$ consisting of elements from the first, second and third row above respectively. We remark that the above vectors are linearly independent, taking into account the $\BZ$-grading, the $\BQ$-grading and Corollary \ref{cor: explicit Hopf pairing Drinfeld generators}.

For $b \in \CB$, define $b^- \in B$ in the following way: let $s \in \BZ_{>0}, n \in \BZ_{\geq 0}$
\begin{displaymath}
((\phi_0^+)^{-1}F_s)^- := (\phi_0^-)^{-1} E_{-s}, \quad h_s^- = C_{-s},\quad C_s^- := h_{-s},\quad E_n^- := F_{-n}.
\end{displaymath}
Let $\Gamma$ be the set of functions $f: \CB \longrightarrow \BZ_{\geq 0}$ such that: $f(b) = 0$ except for finitely many $b \in \CB$; $f(b) \leq 1$ for $b \in \CB_- \cup \CB_+$. For $f \in \Gamma$, define the following ordered products:
\begin{displaymath}
E(f) := \prod_{b \in \CB}^{\rightarrow} b^{f(b)} \in A,\quad F(f) := \prod_{b \in \CB}^{\rightarrow} (b^-)^{f(b)} \in B.
\end{displaymath}
\begin{prop}  \label{prop: orthogonal PBW}
For $f,g \in \Gamma$ and $k, k' \in A \cap B$ products of the $(s_{ii}^{(0)})^{\pm 1}$,
\begin{equation} \label{equ: orthogonal prop}
\varphi(kE(f),k'F(g)) = \varphi(k,k') \delta_{f,g} (-1)^{\sum_{b<b'} f(b)f(b')|b||b'|} \prod_{b \in \CB} f(b)! \varphi(b,b^-)^{f(b)}.
\end{equation}
\end{prop}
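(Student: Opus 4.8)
The plan is to establish \eqref{equ: orthogonal prop} by an induction that peels off one extreme Drinfeld generator at a time, fed by three inputs: the explicit single-generator pairings of Corollary \ref{cor: explicit Hopf pairing Drinfeld generators}, the coproduct formulas of Proposition \ref{prop: coproduct formula}, and the compatibility of $\varphi$ with the $\BZ$- and $\BQ$-gradings recorded before Proposition \ref{prop: Hopf pairing Drinfeld generators}. First I would dispose of the grouplike factors $k,k'$. Each $s_{ii}^{(0)}$ is grouplike of $\BZ$- and $\BQ$-degree zero (the diagonal entries of $\Delta(s_{ij}(z))$ give $\Delta(s_{ii}^{(0)})=s_{ii}^{(0)}\otimes s_{ii}^{(0)}$, the off-diagonal term dying because $s_{21}^{(0)}=0$), so repeated use of the axioms $\varphi(aa',b)=\varphi(a',b_{(1)})\varphi(a,b_{(2)})$ and $\varphi(a,bb')=(-1)^{|b||b'|}\varphi(a_{(1)},b)\varphi(a_{(2)},b')$ lets me pull $k,k'$ past the products. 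Grading forces $\varphi(k,-)$ and $\varphi(-,k')$ to be supported in $\BZ$- and $\BQ$-degree zero; on the coproduct components that arise this leaves only the grouplike contribution, collapsing to the scalar $\varphi(k,k')$ and reducing everything to the case $k=k'=1$.

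The $\BQ$-grading then organizes $\CB$ into three blocks $\CB_-,\CB_0,\CB_+$ of $\BQ$-degree $-\alpha$, $0$, $+\alpha$, whose duals $b^-$ carry degrees $+\alpha$, $0$, $-\alpha$; hence a generator of one block can pair nontrivially only with a dual of the \emph{same} block. Since the generators inside each block commute (Relations (D2) and (D5)), $E(f)$ factors as an $\CB_-$-product times an $\CB_0$-product times an $\CB_+$-product, and likewise for $F(g)$. I would then induct on $\sum_b f(b)$: peel the largest factor of $E(f)$ off the right via $\varphi(aa',b)=\varphi(a',b_{(1)})\varphi(a,b_{(2)})$, and use the $\BZ$- and $\BQ$-gradings together with Corollary \ref{cor: explicit Hopf pairing Drinfeld generators} to force it to pair with its designated dual $b^-$ inside $F(g)$ and nothing else, removing both and lowering the parameter. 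When the peeled generator is odd (an $E_n$ or $(\phi_0^+)^{-1}F_s$, which occur with multiplicity $\leq 1$) the matching is immediate, and transporting it across the intervening odd factors produces exactly the Koszul sign $(-1)^{\sum_{b<b'}f(b)f(b')|b||b'|}$, which records only inversions among odd generators because $|b||b'|=1$ precisely when both are odd.

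For the even block $\CB_0$, where $h_s,C_s$ may occur with arbitrary multiplicity, the factor $\prod_b f(b)!$ appears as in the classical computation: $C_s$ is primitive, and $h_s$ is primitive \emph{against the Cartan block}, since its correction term $\sum_i E_i\otimes F_{s-i}$ has $\BQ$-bidegree $\alpha\otimes(-\alpha)$ and therefore pairs to zero with the degree-zero duals $C_{-t},h_{-t}$. Thus $\varphi(b^m,(b^-)^m)=m!\,\varphi(b,b^-)^m$, the $m!$ coming from the equal terms in $\Delta(b^m)$ while all mixed powers are annihilated by the $\BZ$-grading; matching multiplicities across $\CB_0$ supplies the remaining $\delta_{f,g}$. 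Assembling the three blocks gives $\prod_b f(b)!\,\varphi(b,b^-)^{f(b)}$.

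The hard part will be the correction terms in $\Delta(h_{\pm s})$, which couple the $E$- and $F$-blocks: in the \emph{global} pairing they are a priori nonzero, because an $E_i$ can pair with an $F_{-n}$ from the dual $\CB_+$-block and an $F_{s-i}$ with an $E_{-m}$ from the dual $\CB_-$-block, both nonvanishing by Corollary \ref{cor: explicit Hopf pairing Drinfeld generators}. The crux is to arrange the order of peeling so that at every step the coproduct slot confronting the peeled generator is pinned by $\BQ$-degree to a single block, forcing these cross couplings — and the analogous correction terms $E_i\otimes\phi^+_{n-i}$ in $\Delta(E_n)$ and $\phi^+\otimes F$ in $\Delta(F^+)$ — to drop out before they can contribute. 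Closing this degree-and-sign bookkeeping is where the real effort lies; the individual orthogonality relations themselves follow quickly from Corollary \ref{cor: explicit Hopf pairing Drinfeld generators}.
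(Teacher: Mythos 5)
Your outline follows the same strategy as the paper's proof: reduce to $k=k'=1$ via grouplikeness and grading, then induct on $\sum_b f(b)$ by peeling one extreme generator at a time and matching it against its dual using Corollary \ref{cor: explicit Hopf pairing Drinfeld generators} and the $\BZ$- and $\BQ$-gradings. The block structure, the origin of the Koszul sign, and the source of the $f(b)!$ are all correctly identified. But there is a genuine gap, and you name it yourself: you write that "the crux is to arrange the order of peeling so that at every step the coproduct slot confronting the peeled generator is pinned by $\BQ$-degree to a single block" and that "closing this degree-and-sign bookkeeping is where the real effort lies" --- and then you do not close it. Grading alone does not do the job: after one application of $\Delta$ to an ordered product, the tensor factors are themselves products, and a slot of $\BQ$-degree $-\alpha$ confronting, say, $(\phi_0^-)^{-1}E_{-l}$ could a priori be populated by words like $F_aF_bE_c$ rather than a single $(\phi_0^+)^{-1}F_l$; similarly the correction term $\sum_i E_i\otimes F_{s-i}$ in $\Delta(h_s)$ has the right bidegree to couple nontrivially to the $\CB_+$- and $\CB_-$-duals simultaneously. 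Ruling these out is the actual mathematical content of the proposition, not routine bookkeeping.

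The paper closes this with a small collection of one-sided orthogonality statements that your proposal lacks. Concretely: Claim 3 (resp.\ (3')) asserts $\varphi(A(\CB_0\cup\CB_+),\,(\phi_0^-)^{-1}E_{-l})=0$ (resp.\ $\varphi((\CB_-\cup\CB_0)A,\,F_{-n})=0$), proved by inspecting which first (resp.\ second) tensor factors occur in the coproduct of the dual element; Claims 4 and (4') locate $\Delta(b)-1\otimes b$ (resp.\ $\Delta(b)-b\otimes\phi_0^{\pm1}$, etc.) in explicit subspaces of the form $(\CB_0\cup\CB_+)\otimes A$ or $A\otimes\CB_-$; and a separate Lemma shows that a product of two or more elements of $\CB_0$ is orthogonal to every single $y^-$ with $y\in\CB_0$ (using $\varphi(x,1)=\varepsilon(x)=0$ together with the $\BQ$-grading to kill the $E\otimes F$ correction in $\Delta(y^-)$). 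Only the combination of these facts pins down the \emph{unique} tensor component of $\Delta(E(f))$ that can contribute at each peeling step, which is what forces $f=g$, produces the sign $(-1)^{s-j+|F(g_1)|}$ with $j=s$ (hence the clean Koszul sign), and yields the factor $f(b_1)$ in the Cartan case. To complete your proof you would need to state and verify these auxiliary claims (or equivalents); without them the induction step is not justified.
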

\begin{proof}
 The idea of proof is similar to that of Damiani \cite[Proposition 10.5]{Damiani}. Our situation is much more transparent as we have the explicit coproduct formula. Let us first prove the formula $\varphi(k E(f),k'F(g)) = \varphi(k,k') \varphi(E(f),F(g))$.

 Let $\varphi_2: A^{\otimes 2} \times B^{\otimes 2} \longrightarrow \BC$ be the bilinear form
\begin{displaymath}
\varphi_2(a\otimes a',b\otimes b') := (-1)^{|b||b'|} \varphi(a,b) \varphi(a',b') 
\end{displaymath} 
for $a,a',b,b'$ homogeneous. Then from the definition of Hopf pairing
\begin{displaymath}
\varphi(k E(f), k' F(g)) = \varphi_2((k \otimes k) \Delta (E(f)), k' \otimes F(g)).
\end{displaymath}

\noindent {\it Claim 1.} For $b \in \CB$, we have $\Delta(b) - 1 \otimes b \in \CB A \otimes A$.

This comes from Proposition \ref{prop: coproduct formula}. Since $\varphi(k \CB A, k') = 0$,
we see that $\varphi(k E(f), k' F(g)) = \varphi(k,k') \varphi(k E(f), F(g))$. Let $(\CB)^- := \{b^- \in B | b \in \CB\}$. The following is clear.

\noindent {\it Claim 2.} For $b \in \CB$, we have $\Delta(b^-) - b^- \otimes 1 \in B \otimes B (\CB)^-$.

Thus $\varphi(k E(f), F(g)) = \varphi(E(f),F(g))$. To prove Equation \eqref{equ: orthogonal prop}, we can assume $k = k' = 1$ and $\varphi(E(f),F(g)) \neq 0$. We proceed by induction on the {\it length} of $g \in \Gamma$ defined by
\begin{displaymath}
\ell (g) := \sum_{b \in \CB} f(b).
\end{displaymath} 
If $\ell(g) = 0$, then $F(g) = 1$. Clearly, $\varphi(E(f),1) \neq 0$ if and only if $E(f) = 1$, if and only if $f = g$. The initial statement is proved. Let $\ell(g) > 0$ and assume \eqref{equ: orthogonal prop} whenever the length of the second function is less than $\ell(g)$. Let $b_1$ (resp. $b_2$) be the minimal (resp. maximal) element  $b \in \CB$ such that $f(b) > 0$. We consider three cases separately.

\underline{Case I}: $b_1 = (\phi_0^+)^{-1} F_l \in \CB_-$ for some $l > 0$. Let $g_1 \in \Gamma$ be obtained from $g$ by replacing $g(b_1) = 1$ with $g_1(b_1) = 0$. By definition $F(g) = b_1^- F(g_1)$. 

\noindent {\it Claim 3.} We have $\varphi(A (\CB_0 \cup \CB_+), b_1^-) = 0$.

This comes from the coproduct formula $\Delta(b_1^-)$; the first tensor factors of $\Delta((\phi_0^-)^{-1}E_{-l})$ are always orthogonal to $\CB_0 \cup \CB_+$ with respect to $\varphi$. Let us write $E(f)$ explicitly:
\begin{displaymath}
E(f) = (\phi_0^+)^{-1}F_{n_s}\cdots (\phi_0^+)^{-1} F_{n_2} (\phi_0^+)^{-1} F_{n_1} \prod_{b \in \CB_0 \cup \CB_+}^{\rightarrow} b^{f(b)}.
\end{displaymath}
with $1 \leq n_1 < n_2 < \cdots < n_s$ and $s \geq 0$.

\noindent {\it Claim 4.} For $b \in \CB_0 \cup \CB_+$ we have $\Delta(b) - 1 \otimes b \in (\CB_0 \cup \CB_+) \otimes A$.

This comes from Proposition \ref{prop: coproduct formula}. In view of Claim 3, we see that
\begin{displaymath}
(*):\quad \varphi(E(f),F(g)) = \varphi_2(\Delta (\prod_{i=s}^{1} (\phi_0^+)^{-1}F_{n_i}) (1 \otimes \prod_{b \in \CB_0 \cup \CB_+}^{\rightarrow} b^{f(b)}), (\phi_0^-)^{-1}E_{-l} \otimes F(g_1) ).
\end{displaymath} 
From the coproduct formula $\Delta(b)$ with $b \in \CB_-$ and from the compatibility of $\varphi$ with the $\BQ$-grading we deduce that $s > 0$ and there exists $1 \leq j \leq s$ with $n_j = l$. Since the $(\phi_0^+)^{-1} \phi_i^+$ for $i > 0$ are products of the central elements $C_p \in \CB_0$, Claim 3 implies that the only possible tensor factor of $\Delta(\prod_{i=s}^{1} (\phi_0^+)^{-1}F_{n_i})$ at the RHS of the above identity contributing to non-zero terms will be 
\begin{displaymath}
\prod_{i=s}^{j+1} (1 \otimes (\phi_0^+)^{-1} F_{n_i}) \times ( (\phi_0^+)^{-1} F_{n_j} \otimes (\phi_0^+)^{-1}) \times \prod_{i=j-1}^{1} (1 \otimes (\phi_0^+)^{-1} F_{n_i}).
\end{displaymath}
This says that the RHS of Equation $(*)$ takes the form
\begin{eqnarray*}
RHS(*) &=& (-1)^{s-j + |F(g')|}  \varphi( (\phi_0^+)^{-1} F_l, (\phi_0^-)^{-1} E_{-l}) \times \\
 &\ &  \varphi ((\phi_0^+)^{-1} (\phi_0^+)^{-1} F_{n_s} \cdots \widehat{(\phi_0^+)^{-1} F_{n_j}} \cdots  (\phi_0^+)^{-1} F_{n_1} \prod_{b \in \CB_0 \cup \CB_+}^{\rightarrow} b^{f(b)}, F(g')).
\end{eqnarray*}
Let $f_1 \in \Gamma$ be obtained from $f$ by replacing $f(b_1) = 1$ with $f_1(b_1) = 0$, then 
\begin{align*}
& F(g) = b_1^-F(g_1),\quad E(f) = (-1)^{s-j} b_1 E(f_1), \\
& \varphi(E(f),F(g)) = (-1)^{s-j+ |F(g_1)|} \varphi(b_1,b_1^-) \varphi(E(f_1),F(g_1)) 
\end{align*}
Now $\varphi(E(f_1),F(g_1)) \neq 0$. The induction hypothesis applied to $g_1$ implies that $f_1 = g_1$. By the definition of $g_1$, we see that $f = g$ and $j = s$.

\underline{Case II}: $b_2 = E_n \in \CB_+$ for some $n \geq 0$. Let $g_2 \in \Gamma$ be obtained from $g$ by replacing $g(b_2) = 1$ with $g_2(b_2) = 0$. It follows that $F(g) = F(g_2) b_2^-$. Similar to Claims 3,4:
\begin{itemize}
\item[(3')] $\varphi((\CB_-\cup\CB_0)A, b_2^-) = 0$.
\item[(4')] For $b \in \CB_-$, we have $\Delta(b) - b \otimes (\phi_0^+)^{-1} \in A \otimes \CB_-$; for $b \in \CB_0$, we have $\Delta(b) - b \otimes 1 \in 1 \otimes b + A \otimes \CB_- \phi_0^+$.
\end{itemize}
Since $\phi_0^+$ is a central element, as in the case $b_1 \in \CB_-$ we have
\begin{eqnarray*}
(**):\quad \varphi(E(f),F(g)) &=& \varphi_2( \Delta(E(f)), F(g_2) \otimes b_2^-) \\
&=& \varphi_2((\prod_{b \in \CB_- \cup \CB_0}^{\rightarrow} b^{f(b)} \otimes \prod_{b \in \CB_-}^{\rightarrow} (\phi_0^+)^{-f(b)} ) \Delta(\prod_{b\in \CB_+}^{\rightarrow} b^{f(b)}), F(g_2) \otimes F_{-n}).
\end{eqnarray*}
It follows that $f(b_2) = 1$. In particular, write
\begin{displaymath}
\prod_{b\in \CB_+}^{\rightarrow} b^{f(b)} = E_{m_1} E_{m_2} \cdots E_{m_t}
\end{displaymath}
with $t > 0, m_j = n$ for some $1 \leq j \leq t$ and $0 \leq m_1 < m_2 < \cdots < m_t$. Since the $(\phi_0^+)^{-1}\phi_i^+$ for $i > 0$ are products of the central elements $C_p \in \CB_0$, by (3'), the only tensor factor of $\Delta(\prod_{i=1}^t E_{m_i})$ contributing to non-zero terms in Equation $(**)$ will be
\begin{displaymath}
\prod_{i=1}^{j-1} (E_{m_i} \otimes \phi_0^+) \times (1 \otimes E_{m_j}) \times \prod_{i=j+1}^t(E_{m_i} \otimes \phi_0^+).
\end{displaymath}
The last term of Equation $(**)$ becomes ($\phi_0^+$ can be ignored.)
\begin{displaymath}
(-1)^{j-t + |F(g_2)|} \varphi(E_{n}, F_{-n}) \varphi((\prod_{b \in \CB_- \cup \CB_0}^{\rightarrow} b^{f(b)}) E_{m_1} E_{m_2} \cdots \widehat{E_{m_j}} \cdots E_{m_t}, F(g_2)). 
\end{displaymath}
Let $f_2 \in \Gamma$ be obtained from $f$ by replacing $f(b_2) = 1$ with $f_2(b_2) = 0$. Then 
\begin{align*}
& E(f) = (-1)^{j-t} E(f_2) b_2,\quad F(g) = F(g_2) b_2^-, \\
& \varphi(E(f),F(g)) = (-1)^{j-t + |F(g_2)|} \varphi(E(f_2),F(g_2)) \varphi(b_2,b_2^-).
\end{align*}
The induction hypothesis implies that $f_2 = g_2$. It follows that $f = g$ and $j = t$.

\underline{Case III}: $b_1,b_2 \in \CB_0$. From the coproduct formula $\Delta(b)$ with $b \in \CB_0$, we see that any second tensor factor of $\Delta(F(g))$ is orthogonal to $\CB_-$. This says that $f(b) = 0$ whenever $b \in \CB_-$. Next, any first tensor factor of $\Delta(F(g))$ is orthogonal to $\CB_+$. So $f(b) = 0$ whenever $b \in \CB_+$. Write $F(g) = b_1^- F(g_1)$ with $g_1$ defined as in the case $b_1 \in \CB_-$. Consider
\begin{displaymath}
\varphi(E(f),F(g)) = \varphi_2(\Delta(E(f)), b_1^- \otimes F(g_1)).
\end{displaymath}
As $E(f)$ is a product of the $b \in \CB_0$, from Proposition \ref{prop: coproduct formula} we deduce that
\begin{displaymath}
\Delta(E(f)) - \prod_{b\in \CB_0}^{\rightarrow} (1 \otimes b + b \otimes 1) \in A \CB_+ \otimes A \CB_-.
\end{displaymath}
Note that $\varphi(A \CB_+, b_1^-) = 0$. It follows that
\begin{displaymath}
\varphi(E(f),F(g)) = \varphi_2(\prod_{b \in \CB_0}^{\rightarrow} (1 \otimes b + b \otimes 1)^{f(b)}, b_1^- \otimes F(g_1)).
\end{displaymath}
According to the following lemma, $f(b_1) > 0$. Let $f_3 \in \Gamma$ be obtained from $f$ by replacing $f(b_1)$ with $f_3(b_1) = f(b_1)-1$. Then $E(f) = b_1 E(f_3)$ and 
\begin{displaymath}
\varphi(E(f),F(g)) = f(b_1) \varphi_2( b_1 \otimes E(f_3), b_1^- \otimes F(g_1) ) = f(b_1) \varphi(b_1,b_1^-) \varphi(E(f_3),F(g_1)).
\end{displaymath}
From $\varphi(E(f_3), F(g_1)) \neq 0$ it follows that $f_3 = g_1$. Henceforth $f = g$, as desired.
\end{proof}
\begin{lem}
Let $s > 0$ and $x_1,x_2,\cdots,x_s \in \CB_0$. Let $y \in \CB_0$. If $\varphi(x_1x_2 \cdots x_s, y^-) \neq 0$, then $s = 1$ and $x_1 = y$.
\end{lem}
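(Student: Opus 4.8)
The plan is to reduce the pairing of the product $x_1\cdots x_s$ against $y^-$ to a single pairing, exploiting that $y^-$ is one Drinfeld generator whose coproduct is primitive up to terms that the grading annihilates. By definition $y^-$ equals $C_{-t}$ if $y=h_t$ and $h_{-t}$ if $y=C_t$, for some $t>0$; in both cases $|y^-|_{\BZ}=-t$ and $|y^-|_{\BQ}=0$. First I would peel off the leftmost factor using the Hopf pairing axiom $\varphi(aa',b)=\varphi(a',b_{(1)})\varphi(a,b_{(2)})$ with $a=x_1$ and $a'=x_2\cdots x_s$, obtaining
\[
\varphi(x_1\cdots x_s,\,y^-)=\varphi(x_2\cdots x_s,\,(y^-)_{(1)})\,\varphi(x_1,\,(y^-)_{(2)}).
\]

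Next I would substitute $\Delta(y^-)$ from Proposition \ref{prop: coproduct formula}: for $y^-=C_{-t}$ it is exactly $1\otimes C_{-t}+C_{-t}\otimes 1$, while for $y^-=h_{-t}$ there are extra summands $E_{-t+i}\otimes F_{-i}$. Two grading observations collapse the right-hand side. Since $x_1\in\CB_0$ has $\BZ$-degree $>0$ and $\varphi$ vanishes unless the $\BZ$-degrees sum to zero, we have $\varphi(x_1,1)=0$, so the term from $y^-\otimes 1$ drops out. Since $x_1$ has $\BQ$-degree $0$ whereas $F_{-i}$ has $\BQ$-degree $-\alpha$, the same grading argument gives $\varphi(x_1,F_{-i})=0$, killing the non-primitive part of $\Delta(h_{-t})$. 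Only the contribution of $1\otimes y^-$ survives, leaving
\[
\varphi(x_1\cdots x_s,\,y^-)=\varphi(x_2\cdots x_s,\,1)\,\varphi(x_1,\,y^-).
\]
Now $\varphi(x_2\cdots x_s,1)=0$ whenever $s\geq 2$, again because $x_2\cdots x_s$ then has strictly positive $\BZ$-degree; hence the assumption $\varphi(x_1\cdots x_s,y^-)\neq 0$ forces $s=1$.

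For $s=1$ the quantity is just $\varphi(x_1,y^-)$, and Corollary \ref{cor: explicit Hopf pairing Drinfeld generators} shows this is nonzero exactly when $x_1=y$: the pairings $\varphi(h_p,h_{-t})$ and $\varphi(C_p,C_{-t})$ vanish, while $\varphi(h_p,C_{-t})$ and $\varphi(C_p,h_{-t})$ are proportional to $\delta_{pt}$. This yields $x_1=y$ and completes the proof. I do not expect a genuine obstacle here: the only thing to watch is the non-primitive tail of $\Delta(h_{-t})$, and that is removed at once by the $\BQ$-grading. The lemma is thus a direct consequence of the pairing axiom, the compatibility of $\varphi$ with the $\BZ$- and $\BQ$-gradings, and the explicit pairing table of Corollary \ref{cor: explicit Hopf pairing Drinfeld generators}.
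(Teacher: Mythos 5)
Your proof is correct and follows essentially the same route as the paper: both arguments split off one factor via the pairing axiom, use that $\Delta(y^-)$ is primitive up to $E_m\otimes F_n$ terms killed by the $\BQ$-grading, and then invoke $\varphi(\cdot,1)=0$ on a nontrivial product in $\CB_0$ together with Corollary \ref{cor: explicit Hopf pairing Drinfeld generators}. The only cosmetic difference is that you peel off the leftmost factor $x_1$ while the paper peels off $x_s$ and phrases the $s>1$ case as a contradiction.
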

\begin{proof}
If $s = 1$, then according to Corollary \ref{cor: explicit Hopf pairing Drinfeld generators}, $x_1 = y$. Suppose $s > 1$. Set $y':= x_1x_2\cdots x_{s-1}$. Then $\varphi(y',1) = 0 = \varphi(x_s, 1)$. On the other hand, by Proposition \ref{prop: coproduct formula}, $\Delta(y^-) - y^- \otimes 1 - 1 \otimes y^-$ is either zero or a sum of the $E_m \otimes F_n$ with $m < 0$ and $n \leq 0$. The compatibility of $\varphi$ and the $\BQ$-grading says that
\begin{displaymath}
\varphi(x_s, E_m) \varphi(y', F_n) = 0.
\end{displaymath}
So $\varphi(y'x_s, y^-) = \varphi(y',y^-)\varphi(x_s,1) + \varphi(y',1)\varphi(x_s,y^-) = 0$, a contradiction.
\end{proof}
\section{Universal $R$-matrix}   \label{sec: R matrix}
In this section, we write down the explicit formula for the universal $R$-matrix of the quantum affine superalgebra $U = U_q(\widehat{\mathfrak{gl}(1,1)})$, following the general argument in \cite[\S 3]{Damiani}.

Let $\hbar$ be a formal parameter. Let us extend $U$ to a topological Hopf superalgebra over $\BC[[\hbar]]$ by adding primitive elements $\delta_1,\delta_2$ and by identifying
\begin{displaymath}
q = e^{\hbar},\quad s_{ii}^{(0)} =  e^{\hbar \delta_i},\quad t_{ii}^{(0)} = e^{-\hbar \delta_i}.
\end{displaymath}
Now $A,B$ and the Hopf pairing $\varphi: A \times B \longrightarrow \BC$ are extended similarly to $A_{\hbar},B_{\hbar}$ and $\varphi: A_{\hbar} \times B_{\hbar} \longrightarrow \BC((\hbar))$. Let us first determine the $\varphi(\delta_i,\delta_j)$ with $i,j = 1,2$. As the $\delta_i \in U_{\hbar}$ are primitive elements, the proof of Proposition \ref{prop: orthogonal PBW} says that
\begin{displaymath}
\varphi(s_{ii}^{(0)},t_{jj}^{(0)}) = \varphi(e^{\hbar \delta_i}, e^{-\hbar \delta_j}) = e^{- \varphi(\hbar \delta_i, \hbar \delta_j)} = q^{-\hbar \varphi(\delta_i,\delta_j)}.
\end{displaymath} 
It follows from Proposition \ref{prop: Hopf pairing Drinfeld generators} (and its proof) that 
\begin{displaymath}
(- \hbar \varphi(\delta_i,\delta_j))_{i,j = 1,2} = \begin{pmatrix}
0 & -1 \\
-1 & -2
\end{pmatrix}, \quad (\varphi(\delta_i,\delta_j))_{i,j=1,2}^{-1} = \hbar\begin{pmatrix}
-2 & 1 \\
1 & 0
\end{pmatrix}.
\end{displaymath}
Set $\delta_1^* := \hbar (-2 \delta_1 + \delta_2)$ and $\delta_2^* = \hbar \delta_1$. From the relations of Drinfeld generators it follows that $A_{\hbar}$ (resp. $B_{\hbar}$) has a topological basis $\delta_1^{m_1} \delta_2^{m_2} E(f)$ (resp. $(\hbar^{-1}\delta_1^*)^{m_1} (\hbar^{-1}\delta_2^*)^{m_2} F(f)$) where $m_1,m_2 \in \BZ_{\geq 0}$ and $f \in \Gamma$. By Proposition \ref{prop: orthogonal PBW}, these two bases are orthogonal with respect to $\varphi$. The universal $R$-matrix $\mathcal{R}$ is then the associated Casimir element:
\begin{eqnarray}  
&& \mathcal{R} = \mathcal{K} \mathcal{R}_- \mathcal{R}_0 \mathcal{R}_+ \in A_{\hbar} \widehat{\otimes} B_{\hbar},  \label{equ: R matrix KR}  \\
&& \mathcal{K} = e^{\delta_1 \otimes \delta_1^* + \delta_2 \otimes \delta_2^*} = q^{\delta_1 \otimes \delta_2 + \delta_2 \otimes \delta_1 - 2 \delta_1 \otimes \delta_1},  \\
&& \mathcal{R}_- = \prod_{b\in \CB_-}^{\rightarrow} (1 - \frac{b \otimes b^-}{\varphi(b,b^-)}) = \prod_{s = \infty}^{1} (1 +  \frac{(\phi_0^+)^{-1} F_s \otimes (\phi_0^-)^{-1} E_{-s}}{q-q^{-1}}), \\
&& \mathcal{R}_+ = \prod_{b\in \CB_+}^{\rightarrow} (1 - \frac{b \otimes b^-}{\varphi(b,b^-)}) =  \prod_{n=0}^{\infty} (1 + \frac{E_{n}\otimes F_{-n}}{q^{-1} - q}), \\
&& \mathcal{R}_0 = \exp(\sum_{b\in \CB_0} \frac{b \otimes b^-}{\varphi(b,b^-)}) = \exp((q-q^{-1}) \sum_{s>0} \frac{s}{[s]} (q^{-s} h_s \otimes C_{-s} + q^{s} C_s \otimes h_{-s}) ).  \label{equ: R matrix zero part}
\end{eqnarray}
\begin{rem}   \label{rem: properties of R-matrix}
By the quantum double construction, we have: for $x \in U$ 
\begin{align*}
& (\textrm{Id}_A \otimes \Delta)(\mathcal{R}) = \mathcal{R}_{13} \mathcal{R}_{12},\quad (\Delta \otimes \textrm{Id}_B)(\mathcal{R}) = \mathcal{R}_{13} \mathcal{R}_{23}, \quad \mathcal{R} \Delta(x) =  \Delta^{\mathrm{cop}}(x)\mathcal{R}. 
\end{align*}
\end{rem}
Let $\mathcal{R}(z),\mathcal{R}_{\pm,0}(z)$ be obtained from $\mathcal{R},\mathcal{R}_{\pm,0}$ by replacing the $F_s,E_n,h_s,C_s$ in the last three equations with the $F_s z^s, E_n z^n, h_s z^s, C_s z^s$ respectively. So $\mathcal{R}_{\pm,0}(z) \in U^{\otimes 2}[[z]]$. As a first application, let us deduce the Perk-Schultz matrix $R(z,w)$ in Equation \eqref{for: Perk-Schultz matrix coefficients} from $\mathcal{R}$. There is a natural representation $\rho$ of $U_{\hbar}$ on the vector superspace $\BV$ \cite[\S 4.4]{Z1}:
\begin{align*}
& (\rho(s_{ij}(w)))_{1\leq i,j \leq 2} = \begin{pmatrix}
(q-q^{-1}w) E_{11} + (1-w) E_{22} & (q-q^{-1}) E_{12} \\
(q-q^{-1})w E_{21} & (1-w)E_{11} + (q^{-1}-qw) E_{22}
\end{pmatrix}.
\end{align*}
From the Gauss decomposition we can deduce the action of the Drinfeld generators: 
\begin{align*}
& \rho(E_n) = q^{-2n-1} (q-q^{-1}) E_{12},\quad \rho(F_n) = q^{-2n+1}(q^{-1}-q) E_{21}, \quad \rho(C_s) = - \frac{q^{-s}[s]}{s},
\end{align*}
for $s \in \BZ_{\neq 0}$ and $n \in \BZ$. Now let us compute the following $R(z) \in \End \BV^{\otimes 2}[[z]]$
\begin{eqnarray*}
R(z) &:=& \rho^{\otimes 2}(\mathcal{R}(z)) =  \rho^{\otimes 2}(\mathcal{K}) R_-(z) R_0(z) R_+(z),  \\  
\rho^{\otimes 2}(\mathcal{K}) &=& \rho^{\otimes 2}(q^{-(\delta_1-\delta_2) \otimes (\delta_1-\delta_2) - \delta_1 \otimes \delta_1 + \delta_2 \otimes \delta_2}) = q^{-1 - E_{11}\otimes E_{11} + E_{22} \otimes E_{22}},  \\
R_-(z) &=& \rho^{\otimes 2} (\prod_{s>0} (1 + \frac{(\phi_0^+)^{-1} z^s F_s \otimes (\phi_0^-)^{-1} E_{-s}}{q-q^{-1}})) \\
&=& 1 - \sum_{s>0}\frac{z^s}{q^{-1}-q} (q^{-1}-q)(q-q^{-1}) E_{21} \otimes E_{12}  = 1 -  \frac{(q-q^{-1})z}{1-z} E_{21} \otimes E_{12}, \\
R_0(z) &=& \rho^{\otimes 2} \exp((q-q^{-1}) \sum_{s>0} \frac{sz^s}{[s]} (q^{-s} h_s \otimes C_{-s}  + q^s C_s \otimes h_{-s} ) ) \\
&=& (\frac{1}{1-q^{-2}z} E_{11} + \frac{1}{1-z} E_{22}) \otimes ((1-q^2z)E_{11} + (1-z) E_{22}),  \\
R_+(z) &=& \rho^{\otimes 2} (\prod_{n\geq 0} (1 + \frac{z^n E_n \otimes F_{-n}}{q^{-1}-q}) )  = 1 + \frac{q-q^{-1}}{1-z} E_{12} \otimes E_{21}.
\end{eqnarray*}
It turns out that $R(z) = \frac{R(z,w)}{q^{-1}z-qw}|_{w=1}$. 

As another application, let us deduce the Baxter polynomiality for $U$ in the spirit of Frenkel-Hernandez \cite{FH}. For $a \in \BC^{\times}$, there is a representation $\pi_a$ of $A$ on $\BV$ defined by:
\begin{displaymath}
(\pi_a(s_{ij}(z)))_{1 \leq i,j \leq 2} = \begin{pmatrix}
\frac{1}{1-z a} E_{11} + \frac{q^{-1}}{1-z a} E_{22} & \frac{q^{-1}-q}{1-za} E_{12} \\
\frac{-za}{1-za} E_{21} & E_{11} + \frac{q^{-1}-zaq}{1-za} E_{22}
\end{pmatrix}.
\end{displaymath}
Based on the Gauss decomposition, we get: for $s > 0, n \geq 0$
\begin{align*}
& \pi_a(\delta_1) = \pi_a(\delta_2) = -E_{22},\quad \pi_a(h_s) = \frac{a^s}{s(q-q^{-1})} \textrm{Id}_{\BV} = - \pi_a(C_s), \\
& \pi_a(E_n) = \delta_{n0} (q^{-1}-q) a^n E_{12},\quad \pi_a(F_s) = \delta_{s1} a^s E_{21}. 
\end{align*}

For $c,d \in \BC^{\times}$ with $c \neq \pm 1$, there is a representation $\rho_{c,d}$ of $U$ on $\BV$:
\begin{displaymath}
(\pi_{c,d}(s_{ij}(z)))_{1 \leq i,j \leq 2} = \begin{pmatrix}
c\frac{1-zd}{1-zdc^2} E_{11} + c \frac{q^{-1}-zdq}{1-zdc^2} E_{22} & c\frac{(q^{-1}-q)(dc^2-d)}{1-zdc^2}E_{12} \\
\frac{-z}{1-zdc^2}E_{21} & E_{11} + \frac{q^{-1}-zdc^2q}{1-zdc^2} E_{22}
\end{pmatrix}
\end{displaymath}
and $\pi_{c,d}(t_{ij}(z)) = - z^{-1}d^{-1}c^{-2} (1-zdc^2)(1-z^{-1}d^{-1}c^{-2})^{-1} \pi_{c,d}(s_{ij}(z))$. Similarly
\begin{align*}
&\pi_{c,d}(\phi_0^{\pm}) = c^{\mp 1},\quad \pi_{c,d}(h_s) = \frac{d^s}{s}(\frac{c^{2s}-1}{q-q^{-1}} E_{11} + \frac{c^{2s}-q^{2s}}{q-q^{-1}}E_{22}),\quad \pi_{c,d}(C_s) = -\frac{d^s}{s}\frac{c^{2s}-1}{q-q^{-1}} \textrm{Id}_{\BV}, \\
&\pi_{c,d}(E_n) = (q^{-1}-q)(dc^2-d)d^n E_{12},\quad \pi_{c,d}(F_n) = d^{-1}c^{-1}d^n E_{21}.
\end{align*}
A straightforward calculation indicates that
\begin{eqnarray*}
(\pi_a \otimes \pi_{c,d})(\mathcal{R}(z)) &=& f_{c,d}(a z) R_{c,d}(az) \in \textrm{End}(\BV^{\otimes 2})[[z]], \\
f_{c,d}(z) &=& \exp(\sum_{s > 0} \frac{(q^s+q^{-s})(c^{-2s}-1) d^{-s}}{s(q^s-q^{-s})} z^s), \\
R_{c,d}(z) &=& E_{11} \otimes E_{11} + \frac{1}{1-d^{-1}z} E_{11} \otimes E_{22} + \frac{c-d^{-1}c^{-1}z}{1-d^{-1}z}E_{22} \otimes E_{11} \\
& \ & + \frac{c}{1-d^{-1}z}E_{22} \otimes E_{22} + \frac{d^{-1}c^{-1}}{1-d^{-1}z}E_{12}\otimes E_{21} + \frac{(1-c^2)z}{1-d^{-1}z}E_{21}\otimes E_{12}.
\end{eqnarray*}

Now let $c_j,d_j \in \BC^{\times}$ be such that $c_j^2 \neq 1$ for $1 \leq j \leq n$. Let $\pi_{\underline{c},\underline{d}}$ be the tensor product representation $\otimes_{j=1}^n \pi_{c_j,d_j}$ of $U$ on $W := \BV^{\otimes n}$. For $0 \leq m \leq n$, let $W_m$ be the subspace of $W$ spanned by the tensors $v_{j_1} \otimes v_{j_2} \otimes \cdots \otimes v_{j_n}$ containing exactly $m$\ $v_1$'s. Write 
\begin{displaymath}
(\pi_a \otimes \pi_{\underline{c},\underline{d}}) (\mathcal{R}(z)) = E_{11} \otimes A_{11}(az) + E_{22} \otimes A_{22}(az) + E_{12} \otimes A_{12}(az) + E_{21} \otimes A_{21}(az)  
\end{displaymath}
with $A_{ij}(z) \in \End W[[z]]$. The $W_m$ are stable by the $A_{ii}(z)$.  By Remark \ref{rem: properties of R-matrix}:
\begin{cor}  \label{Baxter polynomiality}
$(\prod\limits_{j=1}^n  \frac{ 1-d_j^{-1}z}{f_{c_j,d_j}(z)}) \times A_{ii}(z)|_{W_m} $ is a polynomial in $z$ of degree $m$ for $i = 1,2$.
\end{cor}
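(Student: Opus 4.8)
The plan is to realize $(\pi_a \otimes \pi_{\underline{c},\underline{d}})(\mathcal{R}(z))$ as an ordered \emph{monodromy} product of the two--site operators already computed, and then to read off the $z$--degree site by site. Iterating the coproduct identity $(\textrm{Id}_A \otimes \Delta)(\mathcal{R}) = \mathcal{R}_{13}\mathcal{R}_{12}$ from Remark \ref{rem: properties of R-matrix}, together with $\pi_{\underline{c},\underline{d}} = \otimes_{j=1}^n \pi_{c_j,d_j}$ and the two--site computation $(\pi_a \otimes \pi_{c,d})(\mathcal{R}(z)) = f_{c,d}(az) R_{c,d}(az)$, yields a factorization
\[
(\pi_a \otimes \pi_{\underline{c},\underline{d}})(\mathcal{R}(z)) = \Big(\prod_{j=1}^n f_{c_j,d_j}(az)\Big)\, M(az), \qquad M(w) := \prod_{j=1}^n [R_{c_j,d_j}(w)]_{0,j},
\]
where $0$ denotes the auxiliary factor $\BV$ carrying $\pi_a$, the index $j$ the $j$-th factor of $W = \BV^{\otimes n}$, and the product is ordered along the auxiliary leg. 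Reading $M(w)$ as a $2\times 2$ matrix over the auxiliary space with entries in $\End W$ gives $A_{ii}(az) = (\prod_j f_{c_j,d_j}(az))\, M_{ii}(az)$, so that the normalized operator of the corollary is exactly $\big(\prod_j \tfrac{1-d_j^{-1}z}{f_{c_j,d_j}(z)}\big) A_{ii}(z) = \prod_j(1-d_j^{-1}z)\, M_{ii}(z) = \widetilde{M}_{ii}(z)$, where $\widetilde{M}(w) := \prod_j [\widetilde{R}_j(w)]_{0,j}$ and $\widetilde{R}_j(w) := (1-d_j^{-1}w) R_{c_j,d_j}(w)$. The point of this first step is that the a priori power series $A_{ii}(z)$ is thereby exhibited as the expansion of a rational operator with scalar prefactor, which is precisely what the normalization clears.

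Next I would record the entries of $\widetilde{R}_j(w)$: clearing the denominator $(1-d_j^{-1}w)$ from the explicit $R_{c_j,d_j}$ turns every coefficient into a polynomial in $w$ of degree $\leq 1$. Organised as a $2\times 2$ matrix over the auxiliary space, the two diagonal blocks act diagonally on $\BV_j$, the block lowering the auxiliary index raises the quantum index $v_1 \mapsto v_2$, and the block raising the auxiliary index lowers it $v_2 \mapsto v_1$. The crucial bookkeeping observation, checked directly on the four blocks, is that in every case the coefficient has $z$--degree $1$ precisely when the output quantum index is $v_1$, and degree $0$ when it is $v_2$. Since $\widetilde{M}_{ii}$ is a sum over auxiliary paths $i = i_0 \to i_1 \to \cdots \to i_n = i$, each term being a product over sites of such block coefficients, the total $z$--degree of a nonzero term equals the number of sites whose output quantum index is $v_1$.

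The final step feeds in the stability of $W_m$: since $A_{ii}(z)$, hence $\widetilde{M}_{ii}(z)$, preserves $W_m$ (the weight considerations noted before the statement), every output vector produced from an input in $W_m$ again carries exactly $m$ factors $v_1$. By the degree observation, every nonzero path--term in $\widetilde{M}_{ii}(z)|_{W_m}$ has $z$--degree exactly $m$, so $\widetilde{M}_{ii}(z)|_{W_m}$ is a polynomial of degree $\leq m$. To see the degree is exactly $m$ I would isolate a diagonal matrix element $\langle w | \widetilde{M}_{ii}(z) | w\rangle$ for $w=\otimes v_{k_j}\in W_m$: any auxiliary path with a jump uses an off-diagonal block and thereby flips some quantum index, so it cannot return $w$ to itself; only the constant auxiliary path contributes, giving $\prod_{j:\,k_j=1}(1-d_j^{-1}z)$ for $i=1$, and $\prod_{j:\,k_j=1}(c_j - d_j^{-1}c_j^{-1}z)\prod_{j:\,k_j=2}c_j$ for $i=2$, each a genuine degree-$m$ polynomial since $c_j,d_j \in \BC^{\times}$. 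Hence the $z^m$-coefficient of $\widetilde{M}_{ii}(z)|_{W_m}$ is a nonzero operator and the degree is exactly $m$.

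I expect the main obstacle to be the first step: pinning down the precise ordering of the $L$-operators in the factorization and tracking the Koszul signs arising from the $\super$-grading, so that $M(w)$ is genuinely the ordered matrix product whose $(i,i)$-entry is the path sum used above. Once the monodromy form is secured, the degree count is a one--line check per block and the stability of $W_m$ does the rest; the signs do not affect the conclusion, since only the $z$--degrees of the coefficients enter.
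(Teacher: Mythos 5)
Your argument is correct and is precisely the proof the paper intends: the corollary is deduced from the coproduct identity $(\textrm{Id}_A\otimes\Delta)(\mathcal{R})=\mathcal{R}_{13}\mathcal{R}_{12}$ of Remark \ref{rem: properties of R-matrix}, which factorizes $(\pi_a\otimes\pi_{\underline{c},\underline{d}})(\mathcal{R}(z))$ into the ordered product of the explicitly computed two-site operators $f_{c_j,d_j}(az)R_{c_j,d_j}(az)$, after which the degree count on the entries of $(1-d_j^{-1}z)R_{c_j,d_j}(z)$ together with the stability of $W_m$ gives the polynomiality, and the diagonal matrix element along the constant auxiliary path gives the exact degree $m$. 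The Koszul signs you worry about are indeed harmless, exactly for the reason you give.
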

This resembles  \cite[Theorem 5.9]{FH} (if we regard $\pi_a$ as the positive prefundamental module $R_{i,a}^+$ in {\it loc. cit}). By definition $A_{11}(z) = \pi_{\underline{c},\underline{d}} (T(z))$ where
\begin{displaymath}
T(z) := \exp(\sum_{s>0} \frac{z^s (q^{-s}C_{-s}-q^s h_{-s})}{[s]}) \in U[[z]].
\end{displaymath}
By \cite{Z1}, all finite-dimensional simple $U$-modules are tensor products of the $\pi_{c,d}$ with one-dimensional modules. The corollary above also indicates the polynomial action of $T(z)$ on tensor products of finite-dimensional simple $U$-modules. (Compare \cite[Theorem 5.13]{FH}.)

We end this section with the following Drinfeld new coproduct on $U$. Let $\Delta_z: U \longrightarrow U^{\otimes 2}[z,z^{-1}]$ be obtained from $\Delta$ by replacing $v \otimes w \in U^{\otimes 2}$ with $z^n v \otimes w$ whenever $|v|_{\BZ} = n$. Define $\Delta_z^{(D)}(x) := \mathcal{R}_+(z) \Delta_z(x) \mathcal{R}_+(z)^{-1} \in U^{\otimes 2}((z))$ for $x \in U$. By Proposition \ref{prop: coproduct formula}:
\begin{align*}
& \Delta_z^{(D)}(h_s) = 1 \otimes h_s + z^s h_s \otimes 1,\quad \Delta_z^{(D)}(C_s) = 1 \otimes C_s + z^s C_s \otimes 1, \\
&\Delta_z^{(D)}(E_n) = 1 \otimes E_n + \sum_{k\geq 0} z^{n+k}E_{n+k} \otimes \phi_{-k}^-,\quad \Delta_z^{(D)}(F_n) = z^n F_n \otimes 1 + \sum_{k\geq 0} z^k \phi_k^+ \otimes F_{n-k}.
\end{align*}
Similar Drinfeld new coproduct has been obtained in \cite{Zy} by different methods.

\subsection*{Acknowledgments.} The author is grateful to his supervisor David Hernandez and to Ilaria Damiani, Nicolai Reshetikhin and Marc Rosso for discussions and support.

\end{document}